\numberwithin{equation}{section}
\newtheorem{Theorem}{Theorem}[section]
\newtheorem*{Theorem*}{Theorem}
\newtheorem{Proposition}[Theorem]{Proposition}
 { \theoremstyle{definition}
\newtheorem{Definition}[Theorem]{Definition}

\newtheorem{Example}[Theorem]{Example}
\newtheorem{Remark}[Theorem]{Remark} }
\begin{document}
\allowdisplaybreaks

\newcommand{\arXivNumber}{2401.03438}

\renewcommand{\PaperNumber}{042}

\FirstPageHeading

\ShortArticleName{Asymptotic Expansions of Finite Hankel Transforms}

\ArticleName{Asymptotic Expansions of Finite Hankel Transforms\\ and the Surjectivity of Convolution Operators}

\Author{Yasunori OKADA~$^{\rm a}$ and Hideshi YAMANE~$^{\rm b}$}

\AuthorNameForHeading{Y.~Okada and H.~Yamane}

\Address{$^{\rm a)}$~Graduate School of Science, Chiba University, \\
\hphantom{$^{\rm a)}$}~Yayoicho 1-33, Inage-ku, Chiba, 263-8522, Japan}
\EmailD{\href{mailto:okada@math.s.chiba-u.ac.jp}{okada@math.s.chiba-u.ac.jp}}
\URLaddressD{\url{https://www.math.s.chiba-u.ac.jp/~okada/}}

\Address{$^{\rm b)}$~Department of Mathematical Sciences, Kwansei Gakuin University, \\
\hphantom{$^{\rm b)}$}~Uegahara, Gakuen, Sanda, Hyogo, 669-1330, Japan}
\EmailD{\href{mailto:yamane@kwansei.ac.jp}{yamane@kwansei.ac.jp}}
\URLaddressD{\url{https://sci-tech.ksc.kwansei.ac.jp/~yamane/}}

\ArticleDates{Received January 10, 2024, in final form May 21, 2024; Published online May 27, 2024}

\Abstract{A compactly supported distribution is called invertible in the sense of Ehren\-preis--{H\"ormander} if the convolution with it induces a~surjection from $\mathcal{C}^{\infty}(\mathbb{R}^{n})$ to itself. We give sufficient conditions for radial functions to be invertible. Our analysis is based on the asymptotic expansions of finite Hankel transforms. The dominant term may be the contribution from the origin or from the boundary of the support of the function. For the proof, we propose a new method to calculate the asymptotic expansions of finite Hankel transforms of functions with singularities at a point other than the origin.}

\Keywords{convolution; asymptotic expansion; Hankel transform; invertibility}

\Classification{45E10; 33C10; 44A15}

\section{Introduction}

It is known that
\[P(D)\colon\ \mathcal{C}^{\infty}(\mathbb{R}^{n})\to\mathcal{C}^{\infty}(\mathbb{R}^{n})\qquad
\text{and}\qquad P(D)\colon\ \mathcal{D}'(\mathbb{R}^{n})\to\mathcal{D}'(\mathbb{R}^{n})
\]
are both surjective, where $P(D)\ne0$ is an arbitrary linear partial
differential operator with constant coefficients~\mbox{\cite{Ehrenpreis54,Ehrenpreis56,Malgrange}}.
Since these mappings coincide with the convolution operator~$P(D)\delta*$
on respective spaces, a natural question is to characterize compactly
supported distributions that induce surjective convolution operators.
Such a distribution is called invertible. Notice that~${\delta(x-a)}$
induces translation and hence is invertible. It was found that a compactly
supported distribution is invertible if and only if its Fourier transform,
an entire function, is slowly decreasing in a~certain sense. This
condition was found by Ehrenpreis~\cite{Ehrenpreis60} and was further
studied by H\"ormander \cite{Hormander62}. A self-contained account
can be found in \cite{Hormander2}. Invertible distributions and slowly
decreasing entire functions became and still are fundamental concepts
in the research of convolution equations. Perturbation of invertible
distributions are studied in \cite{Musin}, and a recent paper \cite{DK}
investigates invertible distributions in an abstract setting. Convolution
equations on symmetric spaces are discussed from the viewpoint of
invertibility in \cite{Kakehi}.

Slow decrease is a technical estimate from below and is not easy to
grasp. The present authors expect that the notions of slow decrease
and of invertibility will become less mysterious if many examples
or sufficient conditions are found. The delta function $\delta_{S(0,r)}$
supported on a~sphere is invertible~\cite{Lim}. Its normal derivatives
are invertible as well~\cite{OY_Tsukuba}. In \cite{Lim,OY_Tsukuba}, the asymptotic behavior of Bessel functions were
used. As is well known~\cite{SW}, the Fourier transform of a radial
function can be written in terms of an integral involving a Bessel
function. The Fourier transforms of~$\delta_{S(0,r)}$ and its normal
derivatives are limit cases.

In the present article, we prove the invertibility of some radial
functions by using two methods of calculating asymptotic expansions
of finite Hankel transforms $\int_{0}^{1}\varphi(s)J_{\nu}(rs){\rm d}s$.
One is a result in Wong \cite{Wong1976}, in which singularities at
$s=0$ determine the asymptotic behaviors. The other method, devised
by the present authors, is useful to deal with singularities at $s=1$.
As far as they know, this is the first result about the asymptotic
behaviors of Hankel transforms of functions singular at a point other
than $s=0$.

Notice that various additive formulas for supports and singular supports
of convolutions and spaces of entire functions with a certain type
of slow decrease are investigated in \cite{BD} and applied to study
the surjectivity of convolution operators. Several examples are discussed
in it and the characteristic function of an ellipsoid is studied by
using a Bessel function. The tools developed in the present paper
could be useful in that line of research.

\section{Invertibility and slow decrease}

In this article, we follow the convention
\[
\hat{f}(\xi)=\big\langle f(x),{\rm e}^{-{\rm i}\xi\cdot x}\big\rangle=\int_{\mathbb{R}^{n}}{\rm e}^{-{\rm i}\xi\cdot x}f(x){\rm d}x,\qquad \xi\in\mathbb{R}^{n}
\]
for the definition of the Fourier transform of a compactly supported
distribution $f\in\mathcal{E}'(\mathbb{R}^{n})$. When $f(x)\in L^{1}(\mathbb{R}^{n})$
is a radial function, its Fourier transform can be written in terms
of an integral involving a Bessel function.
\begin{Theorem}[{\cite[p.~155]{SW}}]
\label{thm:Stein-Weiss} Let $f_{0}(s)$, $s>0$, be a function of
a single variable. The Fourier transform of $f(x)=f_{0}(|x|)$, $x\in\mathbb{R}^{n}$,
is written in terms of a Hankel transform. More precisely,
\[
 \hat{f}(\xi)=\frac{(2\pi)^{n/2}}{r^{n/2-1}}\int_{0}^{\infty}s^{n/2}f_{0}(s)J_{n/2-1}(rs){\rm d}s,\qquad r=|\xi|,\qquad\xi\in\mathbb{R}^{n}.
\]
Here $J_{n/2-1}(z)$ denotes the Bessel function of the first kind
of order $n/2-1$.
\end{Theorem}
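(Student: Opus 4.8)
The plan is to prove the formula by reducing the $n$-dimensional Fourier integral of a radial function to a one-dimensional integral via the well-known expression for the Fourier transform of a radial function on $\mathbb{R}^n$. First I would write $\hat f(\xi) = \int_{\mathbb{R}^n} \mathrm{e}^{-\mathrm{i}\xi\cdot x} f_0(|x|)\,\mathrm{d}x$ and exploit rotational invariance: since $f_0(|x|)$ depends only on $|x|$, the integral depends only on $r = |\xi|$, so I may rotate coordinates to take $\xi = (r,0,\dots,0)$ without loss of generality. Passing to polar coordinates $x = s\omega$ with $s = |x| > 0$ and $\omega \in S^{n-1}$, the volume element becomes $s^{n-1}\,\mathrm{d}s\,\mathrm{d}\sigma(\omega)$, so that
\[
\hat f(\xi) = \int_0^\infty s^{n-1} f_0(s)\left(\int_{S^{n-1}} \mathrm{e}^{-\mathrm{i} r s\, \omega_1}\,\mathrm{d}\sigma(\omega)\right)\mathrm{d}s,
\]
where $\omega_1$ is the first coordinate of $\omega$.

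The key step is then to evaluate the inner angular integral $\int_{S^{n-1}} \mathrm{e}^{-\mathrm{i} t\omega_1}\,\mathrm{d}\sigma(\omega)$ with $t = rs$, which is the classical Fourier transform of the surface measure on the unit sphere. The standard identity, obtained by slicing the sphere into parallels parametrized by $\omega_1 = \cos\theta$ and using the integral representation of the Bessel function, gives
\[
\int_{S^{n-1}} \mathrm{e}^{-\mathrm{i} t\omega_1}\,\mathrm{d}\sigma(\omega) = (2\pi)^{n/2} t^{-(n/2-1)} J_{n/2-1}(t).
\]
I would establish this by writing the slice measure of $S^{n-1}$ at height $\omega_1 = \cos\theta$ in terms of the surface area of the $(n-2)$-sphere, reducing to a single integral in $\theta$, and recognizing the result through Poisson's integral representation of $J_{\nu}$, namely $J_\nu(t) = \frac{(t/2)^\nu}{\Gamma(\nu+1/2)\Gamma(1/2)}\int_{-1}^{1}\mathrm{e}^{\mathrm{i} t u}(1-u^2)^{\nu-1/2}\,\mathrm{d}u$ with $\nu = n/2-1$. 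Substituting $t = rs$ back into the radial integral yields
\[
\hat f(\xi) = \int_0^\infty s^{n-1} f_0(s)\,(2\pi)^{n/2}(rs)^{-(n/2-1)} J_{n/2-1}(rs)\,\mathrm{d}s = \frac{(2\pi)^{n/2}}{r^{n/2-1}}\int_0^\infty s^{n/2} f_0(s) J_{n/2-1}(rs)\,\mathrm{d}s,
\]
which is exactly the asserted formula after collecting the powers $s^{n-1} \cdot s^{-(n/2-1)} = s^{n/2}$.

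The main obstacle, and the only genuinely substantive point, is the evaluation of the angular integral and its precise identification with the Bessel function, including the correct matching of all normalization constants ($(2\pi)^{n/2}$, the Gamma-function factors in Poisson's formula, and the surface-area constants $\omega_{n-2}$ of the lower sphere). Keeping track of these constants — and verifying the low-dimensional base cases $n=1$ and $n=2$ where the sphere $S^{n-1}$ degenerates or the reduction simplifies — is where care is required; the rest of the argument is a routine change of variables. Since this is a cited result from Stein and Weiss, I expect the cleanest exposition to invoke Poisson's representation directly rather than to rederive the Bessel asymptotics, and to treat the convergence of the integral under the standing assumption $f \in L^1(\mathbb{R}^n)$, which guarantees absolute convergence and justifies the interchange of the radial and angular integrations by Fubini's theorem.
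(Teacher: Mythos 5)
Your proposal is correct: the constants check out (with $\nu=n/2-1$, Poisson's representation combined with the slicing constant $\omega_{n-2}=2\pi^{(n-1)/2}/\Gamma((n-1)/2)$ gives exactly $(2\pi)^{n/2}t^{-(n/2-1)}J_{n/2-1}(t)$ for the angular integral, and $s^{n-1}\cdot s^{-(n/2-1)}=s^{n/2}$ as you say). The paper offers no proof of its own here --- it cites Stein--Weiss directly --- and your argument is essentially the standard derivation found in that source, so there is nothing further to compare.
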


\begin{Remark}
Notice that $z^{-(n/2-1)}J_{n/2-1}(sz)$ is an \textit{even} entire
function in a single variable $z$ for each $s>0$. If the support
of $f_{0}(s)$ is bounded, then the Fourier transform of $f(x)=f_{0}(|x|)$
is an even entire function which is expressed by
\[
\hat{f}(\zeta)=(2\pi)^{n/2}\int_{0}^{\infty}s^{n/2}f_{0}(s)\bigl(\sqrt{\zeta^{2}}\bigr)^{-(n/2-1)}J_{n/2-1}\bigl(s\sqrt{\zeta^{2}}\bigr){\rm d}s,
\]
 where $\zeta=(\zeta_{1},\dots,\zeta_{n})\in\mathbb{C}^{n}$, $\zeta^{2}=\sum_{j=1}^{n}\zeta_{j}^{2}$.
\end{Remark}

We employ the notions of invertibility and slow decrease originated
by Ehrenpreis \cite{Ehrenpreis60} and refined by H\"ormander \cite{Hormander62}.
\begin{Theorem}[{\cite[Theorem 2.2, Proposition 2.7]{Ehrenpreis60}, \cite[Definition 3.1, Corollary 3.1]{Hormander62},
\cite[Theorems 16.3.9 and 16.3.10]{Hormander2}}] \label{thm:invertible}
For $u\in\mathcal{E}'(\mathbb{R}^{n})$, the following statements
are equivalent.
\begin{itemize}\itemsep=0pt
\item[$(i)$] There is a constant $A>0$ such that we have
\[
\sup\bigl\{ |\hat{u}(\zeta)|; \,\zeta\in\mathbb{C}^{n},\, |\zeta-\xi|<A\log(2+|\xi|)\bigr\} >(A+|\xi|)^{-A}\label{eq:0706d}
\]
for any $\xi\in\mathbb{R}^{n}$.
\item[$(ii)$] There is a constant $A>0$ such that we have
\begin{equation}
\sup\bigl\{ |\hat{u}(\eta)|;\, \eta\in\mathbb{R}^{n},\,|\eta-\xi|<A\log(2+|\xi|)\bigr\} >(A+|\xi|)^{-A}\label{eq:0706d-1}
\end{equation}
for any $\xi\in\mathbb{R}^{n}$.
\item[$(iii)$] If $w\in\mathcal{E}'(\mathbb{R}^{n})$ and $\hat{w}/\hat{u}$
is a~holomorphic function, then $\hat{w}/\hat{u}$ is the Fourier
transform of a~distribution in $\mathcal{E}'(\mathbb{R}^{n})$.
\item[$(iv)$] If $v\in\mathcal{E}'(\mathbb{R}^{n})$ satisfies $u*v\in\mathcal{\mathcal{C}}^{\infty}(\mathbb{R}^{n})$,
then $v\in\mathcal{\mathcal{C}}^{\infty}(\mathbb{R}^{n})$.
\item[$(v)$] The mapping $u*\colon\mathcal{\mathcal{C}}^{\infty}(\mathbb{R}^{n})\to\mathcal{\mathcal{C}}^{\infty}(\mathbb{R}^{n})$
is surjective.
\item[$(vi)$] The mapping $u*\colon\mathcal{D}'(\mathbb{R}^{n})\to\mathcal{D}'(\mathbb{R}^{n})$
is surjective.
\end{itemize}
\end{Theorem}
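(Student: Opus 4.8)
The backbone of the argument is the Paley--Wiener--Schwartz theorem, which identifies $\hat u$ with an entire function of exponential type whose growth rate is governed by the convex hull of the support of $u$ and which is polynomially bounded along $\mathbb{R}^n$. Every one of the six conditions is to be read through this dictionary, so the plan is to separate a purely function-theoretic core, namely the equivalence $(i)\Leftrightarrow(ii)$ and the division property $(iii)$, from the functional-analytic statements $(iv)$, $(v)$, $(vi)$, which I would reach by duality.

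First I would treat $(i)\Leftrightarrow(ii)$ as a statement about entire functions of exponential type. Here $(ii)\Rightarrow(i)$ is immediate, since the real ball sits inside the complex one and the supremum can only grow. For $(i)\Rightarrow(ii)$, or rather its contrapositive, I would restrict $\log|\hat u|$ to complex lines and apply a Phragm\'en--Lindel\"of / Poisson-integral bound: along each line $\log|\hat u|$ is subharmonic and dominated by a multiple of $|\operatorname{Im}\zeta|$ plus the average of its boundary values on the real axis, so if $|\hat u|$ is uniformly tiny on a slightly larger real ball then, the exponential-type and polynomial bounds rendering the far-field contribution negligible on the logarithmic scale, $|\hat u|$ must also be tiny on the complex ball. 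This controls the complex supremum by the real one up to polynomial factors, after enlarging the constant $A$.

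The heart of the matter is the division theorem $(ii)\Rightarrow(iii)$. Given $w\in\mathcal{E}'$ with $\hat w/\hat u$ holomorphic, the task is to verify that $\hat w/\hat u$ again satisfies Paley--Wiener--Schwartz bounds, hence equals $\hat v$ for some $v\in\mathcal{E}'$. The numerator is controlled from above by hypothesis, and slow decrease supplies, for each $\xi$, a nearby point where $|\hat u|\ge(A+|\xi|)^{-A}$. The hard part will be upgrading this lower bound at a single point to a lower bound for $|\hat u|$ on a full ball, because $\hat u$ may vanish nearby. For this I would invoke a minimum-modulus / Cartan-type lemma for exponential-type functions: the number of zeros of $\hat u$ in a ball is controlled by its growth through Jensen's formula, so outside an exceptional set of small measure one bounds $\log|\hat u|$ from below, and since the quotient is holomorphic the maximum principle carries the resulting bound on $\hat w/\hat u$ across the exceptional set. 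Combining the upper and lower estimates yields both the exponential-type and the polynomial bounds on the quotient, which is $(iii)$.

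Finally I would close the circle by functional analysis. The operators $u*$ on $\mathcal{C}^\infty=\mathcal{E}$ and on $\mathcal{D}'$ are the transposes of convolution by $\check u$ on the preduals $\mathcal{E}'$ and $\mathcal{D}$, so by the closed-range theorem surjectivity in $(v)$ and $(vi)$ amounts to injectivity of the transpose together with weak-$*$ closedness of its range. Injectivity is automatic because $\hat u\not\equiv0$: on the Fourier side the transpose multiplies by a nonzero entire function, and a product of entire functions vanishes only if a factor does, forcing $\hat T\equiv0$; this also yields $(v)\Leftrightarrow(vi)$. Closedness of the range is precisely the solvability encoded in $(iii)$. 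The regularity statement $(iv)$ follows from $(iii)$ by dividing $\widehat{u*v}$, which is rapidly decreasing when $u*v\in\mathcal{C}^\infty\cap\mathcal{E}'$, by $\hat u$ and reading off that $\hat v$ is rapidly decreasing. For the converse that completes the cycle, $(iv)\Rightarrow(ii)$, I would argue by contraposition: if slow decrease fails one finds $\xi_k\to\infty$ on whose neighbourhoods $\hat u$ is extremely small, and uses these to manufacture $v$ whose transform is not rapidly decreasing although $\hat u\,\hat v$ is, contradicting the regularity. Throughout, the single genuine obstacle is the division theorem and, inside it, the minimum-modulus control of the zeros of $\hat u$; the remaining implications are bookkeeping in the Paley--Wiener--Schwartz correspondence and standard duality.
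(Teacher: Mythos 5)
This theorem is not proved in the paper at all: it is quoted verbatim from Ehrenpreis and H\"ormander (the bracketed citations are the ``proof''), so there is no in-paper argument to compare against; the relevant benchmark is the treatment in \cite[Section 16.3]{Hormander2}. Measured against that, your outline is essentially the standard proof and correctly isolates the two genuinely hard points. A few remarks on where your sketch is thinner than the real argument. For $(i)\Rightarrow(ii)$ and for the division step $(ii)\Rightarrow(iii)$, H\"ormander does not use a Cartan-type minimum-modulus theorem with an exceptional set; instead he restricts to complex lines and uses a clean one-variable lemma on quotients of bounded analytic functions on a disc (if $f$, $g$ and $f/g$ are analytic and bounded on a disc and $|g|$ is not too small at the centre, then $f/g$ is controlled at the centre), proved via Jensen's formula and the maximum principle. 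Your Cartan-lemma route can be made to work but is heavier than necessary, and you should be aware that the pointwise lower bound supplied by slow decrease is at a \emph{nearby} point, not at the centre of the ball, so the disc has to be recentred before the lemma applies. Second, your appeal to ``the closed-range theorem'' for $(iii)\Leftrightarrow(v)\Leftrightarrow(vi)$ needs care: $\mathcal{D}'(\mathbb{R}^{n})$ is not a Fr\'echet space, and the correct tool is the surjectivity criterion for the transpose of a map between a Fr\'echet space and the dual of a Fr\'echet(--Schwartz) space, i.e., weak-$*$ closedness of the range of $v\mapsto \check u * v$ on $\mathcal{E}'$, which is exactly what the division property $(iii)$ delivers. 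Third, the implication $(iv)\Rightarrow(ii)$ is the step you dispose of in one sentence (``manufacture $v$''), but it is a genuine construction in \cite{Hormander62}: one builds $v$ as an infinite superposition of distributions concentrated near the frequencies $\xi_k$ where $\hat u$ is abnormally small, and verifying that $u*v$ is smooth while $v$ is not requires the quantitative failure of slow decrease. As a blind reconstruction your proposal identifies the right architecture and the right obstacles, but the division lemma and the counterexample construction are stated as goals rather than carried out.
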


\begin{Remark}
In the present paper, we only deal with surjectivity on $\mathbb{R}^{n}$.
If one wants to study surjectivity on subsets of $\mathbb{R}^{n}$,
one needs the notions of $\mu$-convexity for supports or singular
supports~\cite{Hormander2}.
\end{Remark}

\begin{Definition}[{\cite[Definition 16.3.12]{Hormander2}}]\label{def:invertible} An element $u(x)$ of $\mathcal{E}'(\mathbb{R}^{n})$
is called \textit{invertible }and its Fourier transform $\hat{u}(\zeta)$
is called \textit{slowly decreasing} if the equivalent conditions
in Theorem \ref{thm:invertible} are fulfilled. See Definition \ref{def:minor extension}
below.
\end{Definition}

\begin{Remark}
\label{rem:invertible distributions}A finitely supported non-zero
distribution is invertible \cite[Theorem 4.4]{Hormander62}. The
most important example is $P(D)\delta(x)$, where $P(D)\ne0$ is a
linear partial differential operator with constant coefficients. It
give rise to $P(D)\delta(x)*$, which is nothing but $P(D)$. So $P(D)\colon\mathcal{D}'(\mathbb{R}^{n})\to\mathcal{D}'(\mathbb{R}^{n})$
and $P(D)\colon\mathcal{C}^{\infty}(\mathbb{R}^{n})\to\mathcal{C}^{\infty}(\mathbb{R}^{n})$
are surjective.

Other classes of invertible distributions are discussed in \cite{Abramczuk}.
Let $u$ be a compactly supported measure with an atom, $v\in\mathcal{E}'(\mathbb{R}^{n})$
have singular support disjoint from that of $u$ and $P(D)$ be a
non-zero linear partial differential operator with constant coefficients.
Then $P(D)u+v$ is invertible.

Another useful fact in \cite{Abramczuk} is the following. Let $u\in\mathcal{E}'(\mathbb{R}^{n})$,
$f\in\mathcal{C}^{\infty}(\mathbb{R}^{n})$. If $f$ is real analytic
in a neighborhood of $\operatorname{singsupp} u$ and $fu$ is invertible,
then $u$ is invertible. In \cite{Lim,OY_Tsukuba}, the
spherical mean value operator and its variants are discussed from
the view point of invertibility.
\end{Remark}

\begin{Proposition}\quad
\label{prop:u*v}
\begin{itemize}\itemsep=0pt
\item[$(i)$] If $u,v\in\mathcal{E}'(\mathbb{R}^{n})$ are invertible, then
so is $u*v$.
\item[$(ii)$] Let $\alpha\in\mathbb{R}\setminus \{ 0 \} $. If $u(x)\in\mathcal{E}'(\mathbb{R}^{n})$
is invertible, so is $u(\alpha x)$.
\item[$(iii)$] Let $a\in\mathbb{R}^{n}$. If $u(x)\in\mathcal{E}'(\mathbb{R}^{n})$
is invertible, so is $u(x-a)$.
\item[$(iv)$] If $u(x)\in\mathcal{E}'(\mathbb{R}^{n})$ is invertible, then
so is $\sum_{j=1}^{J}P_{j}(D)u(x-a_{j})$, where $P_{j}(D)$ is a
non-zero linear partial differential operator with constant coefficients
and $a_{j}\in\mathbb{R}^{n}$.
\item[$(v)$] Let $u\in\mathcal{E}'(\mathbb{R}^{n})$ and $v\in\mathcal{C}_{0}^{\infty}(\mathbb{R}^{n})$.
Then, $u+v$ is invertible if and only if $u$ is.
\item[$(vi)$] If $u(x)\in\mathcal{E}'(\mathbb{R}^{m})$ and $v(x')\in\mathcal{E}'(\mathbb{R}^{n})$
are invertible, then so is $(u\otimes v)(x,x')=u(x)v(x')\in\mathcal{E}'(\mathbb{R}^{m+n})$.\footnote{This can be proved in an alternative manner using general facts about
the tensor product of nuclear Fr\'echet spaces.}
\end{itemize}
\end{Proposition}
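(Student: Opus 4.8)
The plan is to prove each part by reducing it, wherever possible, to the characterizations collected in Theorem~\ref{thm:invertible}, using the real-variable slow decrease condition~(ii) and the surjectivity condition~(v) as the two main tools, together with the elementary transformation rules for the Fourier transform.

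For part (i) I would argue entirely on the level of condition~(v). Since $u$ and $v$ are compactly supported, $u*v\in\mathcal{E}'(\mathbb{R}^n)$, and for $f\in\mathcal{C}^\infty(\mathbb{R}^n)$ the associativity $(u*v)*f=u*(v*f)$ holds. If $u$ and $v$ are invertible, then $u*$ and $v*$ are surjective on $\mathcal{C}^\infty(\mathbb{R}^n)$, so their composition $f\mapsto u*(v*f)=(u*v)*f$ is surjective, and condition~(v) yields the invertibility of $u*v$. This also disposes of part (iii): the translate $\delta(\cdot-a)$ is a nonzero finitely supported distribution, hence invertible by Remark~\ref{rem:invertible distributions}, and $u(\cdot-a)=\delta(\cdot-a)*u$, so (i) applies. (Equivalently, $|\widehat{u(\cdot-a)}(\eta)|=|\hat u(\eta)|$ for real $\eta$, so condition~(ii) is preserved verbatim.) Part (ii) I would treat by conjugation: writing $S_\alpha g(x)=g(\alpha x)$, a short Fourier computation gives the operator identity $u(\alpha\cdot)*=|\alpha|^{-n}\,S_\alpha\circ(u*)\circ S_{1/\alpha}$ on $\mathcal{C}^\infty(\mathbb{R}^n)$; since $S_\alpha$ and $S_{1/\alpha}$ are isomorphisms and $u*$ is surjective, so is $u(\alpha\cdot)*$, and (v) again concludes. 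Part (iv) reduces to (i): writing $w=\sum_{j=1}^J P_j(D)\delta(\cdot-a_j)$ we have $\sum_j P_j(D)u(\cdot-a_j)=w*u$, and $w$ is finitely supported; when the $a_j$ are distinct and each $P_j\ne0$ the contributions sit at separated points so $w\ne0$, whence $w$ is invertible by Remark~\ref{rem:invertible distributions} and (i) finishes.

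The substantial case is part (v), and by symmetry (replacing the pair $(u,v)$ by $(u+v,-v)$) it suffices to show that $u$ invertible implies $u+v$ invertible. Here I would verify condition~(ii) for $\hat u+\hat v$, splitting the range of $\xi$. For large $|\xi|$ the point $\eta^*$ furnished by the slow decrease of $\hat u$ is real with $|\eta^*|$ comparable to $|\xi|$, and since $v\in\mathcal{C}_0^\infty(\mathbb{R}^n)$ its transform $\hat v$ is rapidly decreasing on the real axis; thus $|\hat v(\eta^*)|\le\frac12(A+|\xi|)^{-A}$ and the triangle inequality gives $|\hat u(\eta^*)+\hat v(\eta^*)|>\frac12(A+|\xi|)^{-A}>(A'+|\xi|)^{-A'}$ for a suitable $A'$. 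For $\xi$ in a bounded set the bound comes from compactness: $u$ invertible forces $u\notin\mathcal{C}_0^\infty(\mathbb{R}^n)$, hence $u+v\ne0$ and $\hat u+\hat v$ is a nonzero real-analytic function, so $\xi\mapsto\sup_{|\eta-\xi|\le A\log 2}|\hat u(\eta)+\hat v(\eta)|$ is continuous and strictly positive, hence bounded below by a positive constant on each compact set. The main obstacle lies precisely in this bounded range, where the triangle-inequality estimate is unavailable because $\hat v$ need not be small; the compactness argument and the non-vanishing of $u+v$ are what make it work.

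Finally, for part (vi) I would again check condition~(ii), now in $\mathbb{R}^{m+n}$: choosing $\eta$ near $\xi$ and $\eta'$ near $\xi'$ from the slow decrease of $\hat u$ and $\hat v$, the point $(\eta,\eta')$ lies within $A'\log(2+|(\xi,\xi')|)$ of $(\xi,\xi')$ and $|\hat u(\eta)\hat v(\eta')|\ge(A_1+|\xi|)^{-A_1}(A_2+|\xi'|)^{-A_2}\ge(A'+|(\xi,\xi')|)^{-A'}$ after enlarging the constant, using $\log((2+|\xi|)(2+|\xi'|))\le 2\log(2+|\xi|+|\xi'|)$ to control the radius. The alternative route through nuclear Fr\'echet spaces indicated in the footnote bypasses this bookkeeping.
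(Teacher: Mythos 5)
Your proposal is correct and follows essentially the same route as the paper, whose proof is only a sketch (``(i)--(iv) are easy'', (v) via Theorem~\ref{thm:invertible}\,(ii) plus Paley--Wiener--Schwartz, (vi) via condition~(ii) with the constant doubled); your arguments fill in exactly those details. The only superfluous step is the compactness argument for bounded $\xi$ in part~(v), which Proposition~\ref{prop:slowly decreasing p} already covers, and your caveat in (iv) about the $a_j$ being distinct (so that $w=\sum_j P_j(D)\delta(\cdot-a_j)\ne0$) is a reasonable reading of the intended hypothesis.
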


\begin{proof}
The proofs of (i)--(iv) are easy. Theorem \ref{thm:invertible}\,(ii)
and the Paley--Wiener--Schwartz theorem imply~(v). Finally, (vi) follows
from Theorem \ref{thm:invertible}\,(ii). Indeed, if~$u$ and~$v$
satisfy \eqref{eq:0706d-1} with a~common constant~$A$, then $u\otimes v$
satisfies \eqref{eq:0706d-1} with $2A$ instead of~$A$.
\end{proof}

\begin{Definition}\label{def:minor extension}In the present paper, we extend the terminology
of Definition \ref{def:invertible} slightly: an entire function $p(\zeta)$,
not necessarily the Fourier transform of a compactly supported distribution,
is called \textit{slowly decreasing} if there is a constant $A>0$
such that we have
\begin{equation}
\sup\bigl\{ |p(\eta)|;\, \eta\in\mathbb{R}^{n},\,|\eta-\xi|<A\log(2+|\xi|)\bigr\} >(A+|\xi|)^{-A}\label{eq:slowly decreasing f}
\end{equation}
for any $\xi\in\mathbb{R}^{n}$.
\end{Definition}

\begin{Proposition}\label{prop:slowly decreasing p}Let $p(\zeta)$ be an entire function.
There are constants $A,B>0$ such that
\begin{equation}
\sup\bigl\{ |p(\eta)|;\, \eta\in\mathbb{R}^{n},\,|\eta-\xi|<A\log(2+|\xi|)\bigr\} >(A+|\xi|)^{-A}\label{eq:1018a}
\end{equation}
for any $\xi\in\mathbb{R}^{n}$ satisfying $|\xi|\ge B$, if and only
if $p(\zeta)$ is slowly decreasing in the sense of Definition {\rm\ref{def:minor extension}}.
\end{Proposition}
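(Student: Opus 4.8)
The plan is to prove the two implications separately, after disposing of the degenerate case. If $p\equiv 0$ then the supremum in both \eqref{eq:1018a} and \eqref{eq:slowly decreasing f} is identically $0$, so neither inequality can hold and the equivalence is vacuously true. Hence I may assume $p\not\equiv 0$; since $p$ is entire, its restriction to $\mathbb{R}^{n}$ is a real-analytic function that is not identically zero, and therefore does not vanish identically on any open ball. The direction ``slowly decreasing $\Rightarrow$ \eqref{eq:1018a} for large $|\xi|$'' is then immediate: \eqref{eq:slowly decreasing f} holds for every $\xi\in\mathbb{R}^{n}$ with some $A$, so in particular it holds for all $|\xi|\ge B$ with the same $A$ and any $B>0$.

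For the converse, suppose \eqref{eq:1018a} holds for all $|\xi|\ge B$ with constant $A$, and set $B'=\max(B,1)$. I would produce a single $A'\ge\max(A,1)$ that makes \eqref{eq:slowly decreasing f} hold for every $\xi$, treating the ranges $|\xi|>B'$ and $|\xi|\le B'$ in turn. On $|\xi|>B'$ the hypothesis applies, and enlarging the constant to any $A'\ge A$ only widens the ball $\{|\eta-\xi|<A'\log(2+|\xi|)\}$, so the supremum does not decrease; meanwhile $A'+|\xi|>1$, and a comparison of logarithms shows $(A'+|\xi|)^{-A'}\le (A+|\xi|)^{-A}$ once $A'\ge A$. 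Chaining these gives $\sup\{|p(\eta)|;|\eta-\xi|<A'\log(2+|\xi|)\}\ge(A+|\xi|)^{-A}$ wait more precisely it exceeds $(A+|\xi|)^{-A}\ge(A'+|\xi|)^{-A'}$, so \eqref{eq:1018a} persists with $A'$ on this range.

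The heart of the matter, and the main obstacle, is the bounded range $|\xi|\le B'$, where $(A'+|\xi|)^{-A'}$ is no longer forced to be small by a large $|\xi|$. The key observation is that the radius $A'\log(2+|\xi|)$ never degenerates: it is bounded below by $A'\log 2>0$ even at $\xi=0$. Fixing $r_{0}=\tfrac12\log 2$, for every $A'\ge 1$ the open ball in \eqref{eq:slowly decreasing f} contains the closed ball $\{|\eta-\xi|\le r_{0}\}$, so I would set $g(\xi)=\max\{|p(\eta)|;|\eta-\xi|\le r_{0}\}$. This $g$ is continuous in $\xi$ and strictly positive, since $p$ does not vanish on any ball; by compactness of $\{|\xi|\le B'\}$ it attains a minimum $m>0$ there, and crucially $m$ depends only on $p$, $B'$, and $r_{0}$, not on $A'$. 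I would then choose $A'\ge\max(A,1)$ large enough that $(A')^{-A'}<m$, which is possible since $(A')^{-A'}\to 0$. For $|\xi|\le B'$ the supremum in \eqref{eq:slowly decreasing f} is then at least $g(\xi)\ge m>(A')^{-A'}\ge(A'+|\xi|)^{-A'}$, giving the inequality on this range as well. The only delicate point is keeping the two choices compatible: the same $A'$ must be simultaneously $\ge A$ (to preserve the estimate for $|\xi|>B'$) and large enough to dominate $m^{-1}$ (for $|\xi|\le B'$), and taking $A'$ large achieves both at once, completing the argument for all $\xi\in\mathbb{R}^{n}$.
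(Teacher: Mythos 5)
Your proof is correct, but for the main (``only if'') direction it takes a genuinely different route from the paper on the bounded range $|\xi|\le B'$. The paper never invokes real-analyticity: it uses the hypothesis \eqref{eq:1018a} at a single point $\xi_{0}$ with $|\xi_{0}|=B$ to locate a region $S$ where $\sup|p|>(A+B)^{-A}$, and then inflates the radius of the ball in \eqref{eq:slowly decreasing f} to $\frac{2B+A\log(2+B)}{\log 2}\log(2+|\xi|)$ so that every ball centered at a small $\xi$ contains $S$; this yields an explicit new constant $\tilde{A}=\max\bigl\{\frac{2B+A\log(2+B)}{\log 2},\,A+B\bigr\}$. You instead observe that the bounded range is handled automatically for any $p\not\equiv 0$: the restriction of a nonzero entire function to $\mathbb{R}^{n}$ is real-analytic and nonvanishing on every ball, so $g(\xi)=\max_{|\eta-\xi|\le r_{0}}|p(\eta)|$ has a positive minimum $m$ on the compact set $\{|\xi|\le B'\}$, and one then takes $A'$ with $(A')^{-A'}<m$. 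Both arguments are sound. Yours isolates the (mildly interesting) fact that the slow-decrease condition at bounded frequencies holds for every nonzero entire function, at the cost of a non-explicit constant (your $A'$ depends on $m$, hence on $p$ itself, not just on $A$ and $B$) and of invoking the identity theorem to see that $p|_{\mathbb{R}^{n}}\not\equiv 0$; the paper's covering argument is more elementary and produces a constant depending only on $A$ and $B$. Your treatment of the degenerate case $p\equiv 0$ and the chain of inequalities $(A'+|\xi|)^{-A'}\le(A+|\xi|)^{-A}$ for $|\xi|>B'\ge 1$ are both correct.
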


\begin{proof}
The ``if'' part is trivial. We prove the ``only if'' part. If
$|\xi_{0}|=B$, we set
\[
S(\xi_{0})=\bigl\{ \eta\in\mathbb{R}^{n}; \,|\eta-\xi_{0}|<A\log(2+B)\bigr\} .
\]
By \eqref{eq:1018a},
\begin{equation}
\sup\bigl\{ |p(\eta)|;\,\eta\in S(\xi_{0})\bigr\} >(A+B)^{-A}.\label{eq:0706a}
\end{equation}
On the other hand, we set $
S= \{ \eta\in\mathbb{R}^{n};\,|\eta|<B+A\log(2+B) \} \supset S(\xi_{0})$.
Here we may assume~$A$ is so large that $A\log(2+B)>B.$ Then
\begin{equation}
\bigcup_{|\xi_{0}|=B}S(\xi_{0})=S.\label{eq:0706b}
\end{equation}
By \eqref{eq:0706a} and \eqref{eq:0706b}, we have
\begin{equation}
\sup \{ |p(\eta)|;\,\eta\in S \} >(A+B)^{-A}.\label{eq:0706c}
\end{equation}
Assume $|\xi|\le B$ and set
\[
S_{\xi}=\left\{ \eta\in\mathbb{R}^{n};\, |\eta-\xi|\le\frac{2B+A\log(2+B)}{\log2}\log(2+|\xi|)\right\} .
\]
Since
\begin{align*}
S \subset\bigl\{ \eta\in\mathbb{R}^{n};\,|\eta-\xi|<2B+A\log(2+B)\bigr\} \subset S_{\xi},
\end{align*}
\eqref{eq:0706c} implies $
\sup \{ |p(\eta);\, \eta\in S_{\xi} \} >(A+B)^{-A}>(A+B+|\xi|)^{-(A+B)}$.
We set
\[
\tilde{A}=\max\left\{ \frac{2B+A\log(2+B)}{\log2},\, A+B\right\} ,
\]
then \eqref{eq:slowly decreasing f} holds if we replace $A$ with~$\tilde{A}$.
\end{proof}

\begin{Proposition}\label{prop:single variable}Let $p(z)$ be an even entire function
of a single variable. If $p(z)$ is slowly decreasing, then $p\bigl(\sqrt{\zeta^{2}}\bigr)$
is a slowly decreasing function in $\zeta\in\mathbb{C}^{n}$.
\end{Proposition}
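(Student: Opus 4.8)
The plan is to verify the real-variable slow-decrease inequality \eqref{eq:slowly decreasing f} for $g(\zeta):=p\bigl(\sqrt{\zeta^2}\bigr)$ directly, using the fact that on the real locus this function is radial. First I would note that $g$ is entire: since $p$ is even it can be written as $p(z)=q(z^2)$ with $q$ entire in one variable (group the power series of $p$ by even powers), so that $g(\zeta)=q\bigl(\zeta_1^2+\cdots+\zeta_n^2\bigr)$ is entire on $\mathbb{C}^n$; this is the content of the remark following Theorem~\ref{thm:Stein-Weiss}. Next I would observe that for real $\eta\in\mathbb{R}^n$ one has $\eta^2=|\eta|^2\ge 0$, hence $\sqrt{\eta^2}=|\eta|$ and $g(\eta)=p(|\eta|)$. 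Thus the supremum of $|g(\eta)|$ over the ball $|\eta-\xi|<A\log(2+|\xi|)$ is the supremum of $|p(|\eta|)|$ over that same ball, and the whole problem is reduced to transferring a one-dimensional statement about $p$ to an $n$-dimensional radial one.

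I would then show that any constant $A>0$ witnessing the slow decrease of $p$ as a one-variable function works verbatim for $g$. Fix $\xi\in\mathbb{R}^n$ and set $s=|\xi|\ge 0$. Applying the one-variable hypothesis at the real point $s$ produces a real $t_0$ with $|t_0-s|<A\log(2+s)$ and $|p(t_0)|>(A+s)^{-A}$; since $p$ is even and $s\ge 0$ one checks $\bigl||t_0|-s\bigr|\le|t_0-s|$, so after replacing $t_0$ by $|t_0|$ I may assume $t_0\ge 0$. The geometric heart of the argument is to lift this one-dimensional near-maximizer to a point of $\mathbb{R}^n$ of the same radius: for $\xi\ne 0$ take the radial projection $\eta=(t_0/s)\xi$, so that $|\eta|=t_0$ and $|\eta-\xi|=|t_0/s-1|\,s=|t_0-s|$; for $\xi=0$ take any $\eta$ with $|\eta|=t_0$. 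In both cases $g(\eta)=p(|\eta|)=p(t_0)$ and $|\eta-\xi|=|t_0-s|<A\log(2+|\xi|)$, so that $\sup\{|g(\eta)|\colon |\eta-\xi|<A\log(2+|\xi|)\}\ge|p(t_0)|>(A+|\xi|)^{-A}$, which is exactly \eqref{eq:slowly decreasing f} for $g$ with the same constant $A$.

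I do not anticipate a genuine obstacle: the constant transfers without change and the computation is elementary. The only two points that need a moment's care are the reduction to $t_0\ge 0$ via evenness and the degenerate case $\xi=0$, both dispatched above. The essential mechanism is simply that the nearest point of a sphere of radius $t_0$ centered at the origin to a point of norm $s$ lies at distance $|t_0-s|$, which is precisely the one-variable distance appearing in the hypothesis.
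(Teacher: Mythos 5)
Your proof is correct and follows essentially the same route as the paper's: both reduce the $n$-dimensional statement to the one-variable one via the identity that the distance from $\xi$ to the sphere of radius $t_{0}$ is $\bigl||\xi|-t_{0}\bigr|$, using evenness of $p$ to discard negative near-maximizers. The only cosmetic difference is that you verify the inequality for all $\xi$ directly with the same constant $A$, whereas the paper checks it only for $|\xi|\ge B$ and then invokes Proposition~\ref{prop:slowly decreasing p}; both are valid.
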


\begin{proof}Notice that $p\bigl(\sqrt{\zeta^{2}}\bigr)$ is well-defined since
$p(z)$ is even. There are constants $A,B>0$ such that we have
\begin{equation}
\sup\bigl\{ |p(y)|;\, y\in\mathbb{R},\,|y-x|<A\log(2+x)\bigr\} >(A+x){}^{-A}\label{eq:slowly decreasing in R}
\end{equation}
for any $x\in\mathbb{R}$ satisfying $x\ge B$. For any $\xi\in\mathbb{R}^{n}$,
set $x=|\xi|$. The radial function $p (|\eta| )$, $\eta\in\mathbb{R}^{n}$,
satisfies the condition in Proposition~\ref{prop:slowly decreasing p}, since the combination of
\begin{align*}
 \bigl\{ p(|\eta|);\,\eta\in\mathbb{\mathbb{R}}^{n},\,|\eta-\xi|<A\log(2+|\xi|)\bigr\}
 & =\bigl\{ p(|\eta|);\,\eta\in\mathbb{R}^{n},\,||\eta|-|\xi||<A\log(2+|\xi|)\bigr\} \\
 & =\{ p(y);\,y\in\mathbb{R},\,|y-x|<A\log(2+x)\}
\end{align*}
and \eqref{eq:slowly decreasing in R} implies
\begin{align*}
 \sup\bigl\{ p(|\eta|);\, \eta\in\mathbb{\mathbb{R}}^{n},\, |\eta-\xi|<A\log(2+|\xi|)\bigr\} >(A+x)^{-A}=(A+|\xi|)^{-A}
\end{align*}
if $x=|\xi|\ge B$.
\end{proof}
\begin{Proposition}
\label{prop:q single variable}Let $p(z)$ be an even entire function
of a single variable. Set $q(x)=x^{\alpha}p(x)$ for $x>0$, $\alpha\ge0$.
Assume there is a sufficiently small constant $C>0$ and sufficiently
large constants $A,B>0$ such that we have
\begin{equation}
\sup \{ |q(y)| ; \, y>0,\, |y-x|<B \} >Cx{}^{-A}\label{eq:slowly decreasing q}
\end{equation}
for any $x\ge B$. Then $p\bigl(\sqrt{\zeta^{2}}\bigr)$ is a slowly
decreasing entire function in $\zeta=(\zeta_{1},\dots,\zeta_{n})\in\mathbb{C}^{n}$,
where $\zeta^{2}=\sum_{j=1}^{n}\zeta_{j}^{2}$.
\end{Proposition}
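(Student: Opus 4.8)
The plan is to reduce the hypothesis on $q$ to the single-variable slow decrease of $p$ itself, and then invoke Proposition~\ref{prop:single variable} to pass from $p(z)$ to $p\bigl(\sqrt{\zeta^{2}}\bigr)$. Thus the whole argument is a bookkeeping translation between two normalizations of the slow-decrease inequality: the one in \eqref{eq:slowly decreasing q} (a neighborhood of fixed radius $B$ and lower bound $Cx^{-A}$) and the one in Definition~\ref{def:minor extension} (a neighborhood of radius $\tilde{A}\log(2+|\xi|)$ and lower bound $(\tilde{A}+|\xi|)^{-\tilde{A}}$). No new analytic input is required; the content lies entirely in the earlier propositions.

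First I would strip off the factor $x^{\alpha}$. For $x\ge B$ and $y$ with $|y-x|<B$ one has $0<y<2x$, so that $y^{\alpha}\le 2^{\alpha}x^{\alpha}$ (using $\alpha\ge0$) and hence $|q(y)|=y^{\alpha}|p(y)|\le 2^{\alpha}x^{\alpha}|p(y)|$. Feeding this into \eqref{eq:slowly decreasing q} gives
\[
\sup\{|p(y)|;\ y>0,\ |y-x|<B\}>\frac{C}{2^{\alpha}}\,x^{-A-\alpha}\qquad(x\ge B).
\]
Next I would enlarge the neighborhood and reshape the bound. Since $\log(2+x)\to\infty$, for any fixed $\tilde{A}>0$ the inclusion $\{y;\,|y-x|<B\}\subset\{y;\,|y-x|<\tilde{A}\log(2+x)\}$ holds once $x$ exceeds some threshold, so the supremum over the larger set is at least $\tfrac{C}{2^{\alpha}}x^{-A-\alpha}$. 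It then remains to dominate the prescribed lower bound: choosing $\tilde{A}>A+\alpha$ we have $(\tilde{A}+x)^{-\tilde{A}}\le x^{-\tilde{A}}\le\tfrac{C}{2^{\alpha}}x^{-A-\alpha}$ for all sufficiently large $x$, because $x^{-(\tilde{A}-A-\alpha)}\to0$. This produces a constant $\tilde{B}$ with
\[
\sup\{|p(y)|;\ y\in\mathbb{R},\ |y-x|<\tilde{A}\log(2+x)\}>(\tilde{A}+x)^{-\tilde{A}}\qquad(x\ge\tilde{B}),
\]
and, since $p$ is even, the substitution $y\mapsto-y$ shows the same inequality for $x\le-\tilde{B}$; hence it holds for all $|x|\ge\tilde{B}$.

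Finally I would apply Proposition~\ref{prop:slowly decreasing p} in the case $n=1$ to upgrade this ``large $|\xi|$'' statement into slow decrease of $p(z)$ in the sense of Definition~\ref{def:minor extension}, and then Proposition~\ref{prop:single variable} to conclude that $p\bigl(\sqrt{\zeta^{2}}\bigr)$ is slowly decreasing on $\mathbb{C}^{n}$. The only slightly delicate step is the middle one: matching the fixed radius $B$ against the growing radius $\tilde{A}\log(2+x)$ and trading the algebraic bound $\tfrac{C}{2^{\alpha}}x^{-A-\alpha}$ for the required form $(\tilde{A}+x)^{-\tilde{A}}$. Both are handled at once by taking $\tilde{A}>A+\alpha$ and absorbing the finitely many constraints on $x$ into a single large threshold $\tilde{B}$, so I expect no genuine obstacle.
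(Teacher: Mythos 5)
Your proposal is correct and follows essentially the same route as the paper: strip off the factor $x^{\alpha}$ via $y^{\alpha}<(2x)^{\alpha}$ on the ball $|y-x|<B$, absorb the resulting $x^{-\alpha}$ loss into a larger exponent, enlarge the fixed radius $B$ to $\tilde{A}\log(2+x)$, and reduce to Proposition~\ref{prop:single variable}. The only cosmetic difference is that you explicitly route the large-$x$ statement through Proposition~\ref{prop:slowly decreasing p} with $n=1$ (using evenness for negative $\xi$), whereas the paper passes directly to the one-variable estimate \eqref{eq:slowly decreasing in R} that the proof of Proposition~\ref{prop:single variable} actually uses; both are valid.
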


\begin{proof}
The assertion is trivial when $\alpha=0$ (see Proposition \ref{prop:single variable}).
We have only to prove the case~${\alpha>0}$. By choosing a larger $B$
if necessary, we may assume that
\begin{equation}
2^{-\alpha}C>B^{-\alpha}.\label{eq:0706e}
\end{equation}
If $y>0$, $|y-x|<B$, we have $y^{\alpha}<(x+B)^{\alpha}<(2x)^{\alpha}$
and
\begin{equation}
|p(y)|=|y^{-\alpha}q(y)|>(2x)^{-\alpha}|q(y)|.\label{eq:0706f}
\end{equation}
By \eqref{eq:slowly decreasing q}, \eqref{eq:0706e} and \eqref{eq:0706f},
we have
\begin{align*}
 \sup\{ |p(y)|;\, y>0, \,|y-x|<B\}
 & >(2x)^{-\alpha}Cx^{-A}>B^{-\alpha}x^{-(A+\alpha)}\\
 & >x^{-(A+2\alpha)}>(A+2\alpha+x)^{-(A+2\alpha)}.
\end{align*}
On the other hand, $B<A\log(2+x)$ holds if $B/\log2\le A$. We see
that $p(z)$ is slowly decreasing since~\eqref{eq:slowly decreasing in R}
is valid if we adopt $\max\{ A+2\alpha,B/\log2\} $ as
a new value of $A$. Apply Proposition~\ref{prop:single variable}
to complete the proof.
\end{proof}

\section{Wong's result\label{sec:Wong's-result}}

We review the main result of \cite{Wong1976}. We incorporate the
minor correction given in \cite{Wong80correction}. Our notation is
different from that in \cite{Wong1976}. In particular, we employ
$s^{\mu+k}\,(k=0,1,2,\dots)$ instead of~$t^{s+\lambda-1}\,(s=0,1,2,\dots)$.
Let $\varphi(s)$ be a function in $s>0$ and suppose that $\int_{0}^{\infty}\varphi(s)J_{\nu}(rs){\rm d}s$
converges uniformly for all large values of $r$. We assume $\varphi(s)$
has the following three properties.
\begin{description}\itemsep=0pt
\item [{$(\Phi_{1})$}] $\varphi^{(m)}(s)$ is continuous in $s>0$, where
$m$ is a nonnegative integer.
\item [{$(\Phi_{2})$}] ${\varphi^{(j)}(s)\sim\sum_{k=0}^{\infty}c_{k}\frac{{\rm d}^{j}}{{\rm d}s^{j}}s^{\mu+k}}$ $(s\to+0$; $j=0,1,2,\dots,m)$,
where $c_{0}\ne0$, $\operatorname{Re}(\mu+\nu)>-1$, $m\ge\operatorname{Re}\mu+1$.
\item [{$(\Phi_{3})$}] $\int_{1}^{\infty}s^{-1/2}\varphi(s){\rm e}^{{\rm i}rs}{\rm d}s$, $\int_{1}^{\infty}s^{-1/2}s^{j-m-1/2}\varphi^{(j)}(s){\rm e}^{{\rm i}rs}{\rm d}s$ $(1\le j\le m)$
converges uniformly for all large values of $r$.
\end{description}
\begin{Theorem}[{\cite[equations~(3.6) and (4.2)]{Wong1976}, \cite[p.~409]{Wong80correction}}]
Assume that $(\Phi_{1})$--$(\Phi_{3})$ hold, and let $n$ be a
positive integer satisfying $m-\operatorname{Re}\mu-1<n<m-\operatorname{Re}\mu+\frac{1}{2}$.
Then
\begin{equation}
\int_{0}^{\infty}\varphi(s)J_{\nu}(rs){\rm d}s=\sum_{k=0}^{n-1}c_{k}\frac{\Gamma\left(\frac{1}{2}(\mu+k+\nu+1)\right)}{\Gamma\left(-\frac{1}{2}(\mu+k-\nu-1)\right)}\frac{2^{\mu+k}}{r^{\mu+k+1}} +o\bigl(r^{-m}\bigr),\qquad r\to\infty.\label{eq:Wong}
\end{equation}
\end{Theorem}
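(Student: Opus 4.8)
The plan is to separate the contribution of the singularity of $\varphi$ at $s=0$, which produces the explicit series in \eqref{eq:Wong}, from a remainder that decays faster than $r^{-m}$. First I fix a smooth cut-off $\chi(s)$ with $\chi\equiv1$ near $s=0$ and $\chi\equiv0$ on $s\ge1$, and write
\[
\varphi(s)=P(s)+g(s),\qquad P(s)=\chi(s)\sum_{k=0}^{n-1}c_k s^{\mu+k},\qquad g(s)=\varphi(s)-P(s).
\]
By $(\Phi_2)$, removing the first $n$ terms of the expansion forces $g^{(j)}(s)=O\bigl(s^{\operatorname{Re}\mu+n-j}\bigr)$ as $s\to+0$ for $0\le j\le m$, while $g=\varphi$ on $s\ge1$, so the behaviour at infinity is still the one governed by $(\Phi_3)$.

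The summands of \eqref{eq:Wong} arise from $P$. Splitting $\chi s^{\mu+k}=s^{\mu+k}-(1-\chi)s^{\mu+k}$, the genuinely singular part is evaluated by the Mellin transform of the Bessel function,
\[
\int_0^\infty s^{\mu+k}J_\nu(rs)\,{\rm d}s=\frac{2^{\mu+k}}{r^{\mu+k+1}}\,\frac{\Gamma\bigl(\tfrac12(\mu+k+\nu+1)\bigr)}{\Gamma\bigl(-\tfrac12(\mu+k-\nu-1)\bigr)},
\]
read off by meromorphic continuation in $\mu+k$ (the integral converges absolutely only for $\operatorname{Re}(\mu+k)<\tfrac12$, while $\operatorname{Re}(\mu+\nu)>-1$ secures convergence at the origin); these are exactly the terms displayed in \eqref{eq:Wong}. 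The complementary integrals $\int_0^\infty(1-\chi)s^{\mu+k}J_\nu(rs)\,{\rm d}s$ have smooth amplitudes supported away from $0$, so repeatedly integrating by parts with the recurrence
\[
\frac{{\rm d}}{{\rm d}s}\bigl[s^{\nu+1}J_{\nu+1}(rs)\bigr]=r\,s^{\nu+1}J_\nu(rs)
\]
gains a factor $r^{-1}$ at every step and renders them $O\bigl(r^{-N}\bigr)$ for all $N$.

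The heart of the matter is to show $\int_0^\infty g(s)J_\nu(rs)\,{\rm d}s=o\bigl(r^{-m}\bigr)$. Integrating by parts $m$ times with the same recurrence raises the Bessel index from $\nu$ to $\nu+m$ and extracts $r^{-m}$, replacing $g$ by its image $G$ under the $m$-fold first-order operator $h\mapsto s^{\nu+1}\frac{{\rm d}}{{\rm d}s}\bigl(s^{-\nu-1}h\bigr)$ (with the index shifted at each stage), where $G(s)=O\bigl(s^{\operatorname{Re}\mu+n-m}\bigr)$ near $0$. The boundary contributions at $s\to+0$ vanish because there $J_{\nu+j}(rs)=O\bigl(s^{\operatorname{Re}\nu+j}\bigr)$ and the iterated derivatives of $g$ vanish like $s^{\operatorname{Re}\mu+n-j}$, whence each product is $O\bigl(s^{\operatorname{Re}(\mu+\nu)+n}\bigr)\to0$ by $\operatorname{Re}(\mu+\nu)>-1$; the boundary values at $s\to\infty$ are furnished by the uniform convergence in $(\Phi_3)$. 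It remains to prove $\int_0^\infty G(s)J_{\nu+m}(rs)\,{\rm d}s=o(1)$. Splitting at $s=1/r$ and using $J_{\nu+m}(rs)=O\bigl((rs)^{\operatorname{Re}\nu+m}\bigr)$ on $(0,1/r)$ and $J_{\nu+m}(rs)=O\bigl((rs)^{-1/2}\bigr)$ beyond it, the near-origin contribution of this integral is $O\bigl(r^{\,m-\operatorname{Re}\mu-n-1}\bigr)$, so its share of the remainder is $O\bigl(r^{-\operatorname{Re}\mu-n-1}\bigr)$, which is $o\bigl(r^{-m}\bigr)$ precisely because $n>m-\operatorname{Re}\mu-1$; over $s\ge1$, where $g=\varphi$, inserting the large-argument asymptotics of $J_{\nu+m}$ turns the integral into the oscillatory moments whose uniform convergence is postulated in $(\Phi_3)$, carrying an additional factor $r^{-1/2}$, so this part is $o\bigl(r^{-m}\bigr)$ as well.

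I expect the main obstacle to be this last step: performing the $m$-fold integration by parts while simultaneously controlling both endpoints. Near $s=0$ one must weigh the vanishing order of the iterated derivatives of $g$ against the growth order of $J_{\nu+m}$, and this balance is exactly what the lower bound $n>m-\operatorname{Re}\mu-1$ enforces; the upper bound $n<m-\operatorname{Re}\mu+\tfrac12$ then guarantees that every displayed summand genuinely dominates the $o\bigl(r^{-m}\bigr)$ remainder, so that the expansion is sharp and non-redundant. Near $s=\infty$ the only control available is the conditional convergence in $(\Phi_3)$, so the closing estimate must be phrased as an oscillatory-integral bound rather than an absolute one. A cleaner but less elementary route would replace all of this by the Mellin--Parseval formula: writing the transform as a Mellin--Barnes integral and shifting the contour to the right reproduces \eqref{eq:Wong} as the residues at the poles coming from the small-$s$ expansion of $\varphi$, with $(\Phi_1)$--$(\Phi_3)$ serving to locate the Mellin transform of $\varphi$ in the relevant strip and to force the decay of the displaced contour integral.
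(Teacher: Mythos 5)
This theorem is quoted from Wong's 1976 paper (with the 1980 correction) and the paper under review gives no proof of it, so there is no internal argument to compare yours against. On its own terms, your outline follows the standard route, which is essentially Wong's: peel off the first $n$ terms of the small-$s$ expansion, evaluate them via Weber's integral $\int_0^\infty t^{\mu+k}J_\nu(t)\,{\rm d}t=2^{\mu+k}\Gamma\bigl(\frac{1}{2}(\mu+k+\nu+1)\bigr)/\Gamma\bigl(\frac{1}{2}(\nu+1-\mu-k)\bigr)$, and kill the remainder by $m$-fold integration by parts with the ladder identity followed by a split at $s=1/r$; your accounting of where the two inequalities on $n$ enter is correct.

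The genuine gap is the evaluation of the explicit terms when $\operatorname{Re}(\mu+k)\ge\frac{1}{2}$, which does occur for the larger $k\le n-1$ since $n$ can approach $m-\operatorname{Re}\mu+\frac{1}{2}$. In that range neither $\int_0^\infty s^{\mu+k}J_\nu(rs)\,{\rm d}s$ nor $\int_0^\infty(1-\chi(s))s^{\mu+k}J_\nu(rs)\,{\rm d}s$ converges, even conditionally, so the decomposition $\chi s^{\mu+k}=s^{\mu+k}-(1-\chi)s^{\mu+k}$ splits a convergent integral into two meaningless ones, and ``meromorphic continuation in $\mu+k$'' is not an argument: what must be continued is an asymptotic statement about the convergent integral $\int_0^\infty\chi(s)s^{\mu+k}J_\nu(rs)\,{\rm d}s$, and asymptotic relations do not continue analytically out of the strip where Weber's formula is valid. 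Likewise your repeated integration by parts on the tail piece produces at the first step a boundary term $s^{\mu+k}J_{\nu+1}(rs)$ at $s=\infty$ that does not vanish there. This is exactly the delicate point in Wong's work (and the reason error bounds and a later correction were needed); it is repairable, for instance by integrating by parts enough times to lower the exponent into the convergent strip while tracking the boundary terms at $s=0$, or by the Mellin--Barnes contour-shift you mention at the very end, but one of these must actually be carried out. Two smaller glosses: the range $1/r\le s\le 1$ in your final split is never estimated (it does work, giving $O\bigl(r^{m-\operatorname{Re}\mu-n-1}\bigr)=o(1)$ precisely because $n>m-\operatorname{Re}\mu-1$), and the vanishing of the boundary terms at $s=\infty$ in the $m$-fold integration by parts of $g$ requires more than the uniform convergence in $(\Phi_3)$, which controls the integrals but not the pointwise decay of $\varphi^{(j)}$.
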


\begin{Remark}\label{rem:c_0 ne 0}It is natural to assume $c_{0}\ne0$ as in $(\Phi_{2})$
and we adopt it as a convention. See~\eqref{eq:1230c}. If one wants
to study the case $\varphi(s)\sim0$, one has only to consider the
difference of two functions with the same nontrivial expansion.
\end{Remark}

\section{Asymptotic expansion and invertibility\label{sec:Main}}

First, we utilize Wong's results reviewed in the previous section
to consider the contribution of the singularities at $s=0$ to Hankel
transforms. We consider an infinitely differentiable function $\varphi(s)$
and multiply it by a cut-off function. Therefore, $m$ in $(\Phi_{2})$
can be arbitrary and the conditions of uniform convergence are satisfied.
The $k$-th coefficient in the right-hand side in \eqref{eq:Wong}
vanishes if and only if $c_{k}=0$ or $\frac{1}{2}(\mu+k-\nu-1)$
is a nonnegative integer. Notice that~${\frac{1}{2}(\mu+k+\nu+1)}$
cannot be a pole of the Gamma function since $\operatorname{Re}(\mu+k+\nu+1)>k\ge0$.
This observation motivates us to introduce the set $K$ in the proposition
below.
\begin{Proposition}
\label{prop:Wong76a}Let $\mu, \nu\in\mathbb{C}$ and $\varphi(s)$
be an infinitely differentiable function in $(0,1)$. We make the
following assumptions:
\begin{align}
 & \operatorname{Re}(\mu+\nu)>-1,\label{eq:1230a}\\
 & \varphi^{(j)}(s)\sim\sum_{k=0}^{\infty}c_{k}\frac{d^{j}}{ds^{j}}s^{\mu+k},\qquad s\to+0;\quad j=0,1,2,\dots,\label{eq:1230b}\\
 & c_{0}\ne0.\label{eq:1230c}
\end{align}

Moreover, let $\chi_{0}(s)$ be an infinitely differentiable function
such that $\chi_{0}(s)=1$ in $0<s\le\varepsilon$ and $\chi_{0}(s)=0$
in $1-\varepsilon\le s<1$, where $0<\varepsilon<1/3$. We define
the set $K=K (\mu,\nu, \{ c_{k} \} _{k} )$ by
\begin{align}
K & =K(\mu,\nu,\{ c_{k}\} _{k})
 =\left\{ k\in\mathbb{N}_{0};\,c_{k}\ne0,\,\frac{1}{2}(\mu+k-\nu-1)\not\in\mathbb{N}_{0}\right\},\label{eq:set K}
\end{align}
where $\mathbb{N}_{0}$ is the set of nonnegative integers. Then,
we have the following.

If $K\ne\varnothing$,
\begin{align*}
 & \int_{0}^{1}\chi_{0}(s)\varphi(s)J_{\nu}(rs){\rm d}s=c_{k_{0}}\frac{\Gamma\left(\frac{1}{2}(\mu+k_{0}+\nu+1)\right)2^{\mu+k_{0}}}{\Gamma\left(\frac{1}{2}(-\mu-k_{0}+\nu+1)\right)r^{\mu+k_{0}+1}}+o\big(r^{-\operatorname{Re}(\mu+k_{0}+1)}\big)
\end{align*}
as $r\to\infty$, where $k_{0}=\min K$.

If $K=\varnothing$,
\begin{equation}
\int_{0}^{1}\chi_{0}(s)\varphi(s)J_{\nu}(rs){\rm d}s=o\big(r^{-A}\big)\label{eq:infinitely rapidly}
\end{equation}
as $r\to\infty$, where $A$ is an arbitrarily large real number.
\end{Proposition}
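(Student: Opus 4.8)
The plan is to reduce everything to Wong's theorem (formula \eqref{eq:Wong}) applied to the modified integrand $\psi(s)=\chi_{0}(s)\varphi(s)$, extended by zero to all of $s>0$. First I would record the three features of $\psi$ that make this legitimate. Since $\chi_{0}(s)=0$ on $1-\varepsilon\le s<1$, the function $\psi$ vanishes identically for $s\ge1-\varepsilon$, so it is compactly supported and its zero-extension is infinitely differentiable on $(0,\infty)$; this gives $(\Phi_{1})$ for every $m$. Since $\chi_{0}(s)=1$ on $0<s\le\varepsilon$, we have $\psi\equiv\varphi$ near the origin, so $\psi$ inherits the asymptotic expansion \eqref{eq:1230b} with the same coefficients $c_{k}$ and the same $\mu$; together with \eqref{eq:1230a} and \eqref{eq:1230c} this yields $(\Phi_{2})$ for any prescribed integer $m\ge\operatorname{Re}\mu+1$. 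Finally, because $\psi$ and all its derivatives vanish for $s\ge1-\varepsilon$, the integrals appearing in $(\Phi_{3})$ are taken over a set on which the integrand is identically zero, so $(\Phi_{3})$ holds trivially. As $\int_{0}^{1}\chi_{0}(s)\varphi(s)J_{\nu}(rs)\,{\rm d}s=\int_{0}^{\infty}\psi(s)J_{\nu}(rs)\,{\rm d}s$, Wong's formula applies.

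Next I would read off which terms in \eqref{eq:Wong} survive. The $k$-th coefficient carries the factor $1/\Gamma\bigl(-\tfrac{1}{2}(\mu+k-\nu-1)\bigr)$, which vanishes precisely when $-\tfrac{1}{2}(\mu+k-\nu-1)$ is a nonpositive integer, i.e.\ when $\tfrac{1}{2}(\mu+k-\nu-1)\in\mathbb{N}_{0}$; and the term also vanishes when $c_{k}=0$. Hence the nonvanishing terms are exactly those indexed by $k\in K$, and the numerator Gamma factor never contributes a pole because $\operatorname{Re}(\mu+k+\nu+1)>k\ge0$. This is precisely the bookkeeping encoded in the definition \eqref{eq:set K} of $K$.

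For the case $K\ne\varnothing$ I would isolate the leading survivor $k_{0}=\min K$. Every term with index $k<k_{0}$ vanishes, so the sum in \eqref{eq:Wong} effectively begins at $k=k_{0}$, a term of magnitude $r^{-\operatorname{Re}(\mu+k_{0}+1)}$; every surviving term with $k>k_{0}$ has strictly smaller order and is therefore $o\bigl(r^{-\operatorname{Re}(\mu+k_{0}+1)}\bigr)$. It then remains to push the error $o(r^{-m})$ below this threshold. I would choose an integer $m>\operatorname{Re}\mu+k_{0}+1$ (which also satisfies $m\ge\operatorname{Re}\mu+1$) and pick $n$ in the admissible window $m-\operatorname{Re}\mu-1<n<m-\operatorname{Re}\mu+\tfrac{1}{2}$; since this interval has length $\tfrac{3}{2}>1$ it contains an integer, and its left endpoint exceeds $k_{0}$, forcing $n\ge k_{0}+1$, so the sum indeed reaches the index $k_{0}$. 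With these choices $o(r^{-m})=o\bigl(r^{-\operatorname{Re}(\mu+k_{0}+1)}\bigr)$, and collecting the $k_{0}$-term, together with the identity $\Gamma\bigl(-\tfrac{1}{2}(\mu+k_{0}-\nu-1)\bigr)=\Gamma\bigl(\tfrac{1}{2}(-\mu-k_{0}+\nu+1)\bigr)$, produces the stated expansion. For the case $K=\varnothing$, every term of \eqref{eq:Wong} vanishes, so the integral equals $o(r^{-m})$; since $(\Phi_{1})$--$(\Phi_{3})$ hold for arbitrarily large $m$, this gives $o(r^{-A})$ for every $A$.

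The computations are all routine, and I expect the one step needing genuine care to be the order bookkeeping in the nonempty case: one must verify that the admissible window for $n$ can be positioned, by taking $m$ large, so as to both reach the index $k_{0}$ and drive the remainder strictly below $r^{-\operatorname{Re}(\mu+k_{0}+1)}$. Checking the interplay of the constraints $n<m-\operatorname{Re}\mu+\tfrac{1}{2}$ and $m>\operatorname{Re}\mu+k_{0}+1$ is the place where the inequalities must be confirmed rather than Wong's theorem merely invoked.
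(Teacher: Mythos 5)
Your proposal is correct and follows essentially the same route as the paper, which does not even write out a formal proof: the paragraph preceding Proposition~\ref{prop:Wong76a} makes exactly your points (the cut-off makes $m$ arbitrary and the convergence conditions automatic, and the $k$-th term of \eqref{eq:Wong} vanishes iff $c_k=0$ or $\tfrac12(\mu+k-\nu-1)\in\mathbb{N}_0$, the numerator Gamma never being a pole). Your explicit bookkeeping of the choice of $m$ and $n$ fills in details the authors leave implicit, and it checks out.
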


\begin{Proposition}
\label{prop:Wong76b}Assume $\operatorname{Re}(\nu)>-1$. If $\varphi(s)$
is an infinitely differentiable function in $(0,\infty)$, its support
is bounded and $\varphi(s)=s^{\nu+1}\big(1-s^{2}\big)^{\alpha}$, $\alpha\in\mathbb{C}$
near $s=0$, then
\[
\int_{0}^{\infty}\varphi(s)J_{\nu}(rs){\rm d}s=o\big(r^{-A}\big),
\]
where $A$ is an arbitrarily large real number.
\end{Proposition}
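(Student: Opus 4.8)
The plan is to localize the integral: I split $\varphi$ into a piece supported near $s=0$, where $\varphi$ carries its singularity and Proposition~\ref{prop:Wong76a} applies directly, and a smooth remainder supported away from the origin, which I will kill by integrating by parts against Bessel's operator.

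Near $s=0$ the hypothesis $\varphi(s)=s^{\nu+1}(1-s^{2})^{\alpha}$ together with the binomial series $(1-s^{2})^{\alpha}=\sum_{\ell\ge0}(-1)^{\ell}\binom{\alpha}{\ell}s^{2\ell}$ gives the expansion \eqref{eq:1230b} with $\mu=\nu+1$ and coefficients $c_{2\ell}=(-1)^{\ell}\binom{\alpha}{\ell}$, $c_{2\ell+1}=0$; in particular $c_{0}=1\ne0$ and $\operatorname{Re}(\mu+\nu)=\operatorname{Re}(2\nu+1)>-1$, so the assumptions of Proposition~\ref{prop:Wong76a} are met. The decisive point is that the set $K$ of \eqref{eq:set K} is empty: for every index with $c_{k}\ne0$ we have $k=2\ell$, and then $\tfrac12(\mu+k-\nu-1)=\tfrac12\bigl((\nu+1)+2\ell-\nu-1\bigr)=\ell\in\mathbb{N}_{0}$, so $k\notin K$. (Equivalently, each contribution from the origin is annihilated because $\tfrac12(-\mu-k+\nu+1)=-\ell$ is a pole of $1/\Gamma$.) Hence Proposition~\ref{prop:Wong76a} yields $\int_{0}^{1}\chi_{0}(s)\varphi(s)J_{\nu}(rs)\,{\rm d}s=o(r^{-A})$ for arbitrarily large $A$.

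It remains to estimate $\psi:=(1-\chi_{0})\varphi$, where I extend $\chi_{0}$ by $0$ to $[1,\infty)$. Since $\chi_{0}\equiv1$ near $0$ and $\varphi$ is smooth with bounded support, $\psi\in\mathcal{C}_{0}^{\infty}((0,\infty))$ is compactly supported in the open half-line; in particular it vanishes identically near $s=0$, so the $s^{-1}$- and $s^{-2}$-singularities of the Bessel operator are harmless on $\operatorname{supp}\psi$. Writing $L:=\partial_{s}^{2}+s^{-1}\partial_{s}-\nu^{2}s^{-2}$ and using Bessel's equation $L[J_{\nu}(rs)]=-r^{2}J_{\nu}(rs)$, repeated integration by parts (with all boundary terms vanishing by compact support) gives, for every $N$,
\[
\int_{0}^{\infty}\psi(s)J_{\nu}(rs)\,{\rm d}s=(-r^{-2})^{N}\int_{0}^{\infty}(L^{t})^{N}[\psi](s)\,J_{\nu}(rs)\,{\rm d}s,
\]
where $L^{t}$ is the formal transpose. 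As $(L^{t})^{N}[\psi]\in\mathcal{C}_{0}^{\infty}((0,\infty))$ and $J_{\nu}$ is bounded on $s\ge\varepsilon$, the right-hand integral is $O(1)$, so $\int_{0}^{\infty}\psi\,J_{\nu}(rs)\,{\rm d}s=O(r^{-2N})$ for every $N$, i.e.\ $o(r^{-A})$ for every $A$. Adding the two contributions proves the claim.

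The conceptual heart is the emptiness of $K$, which reflects the special choice of exponent $\nu+1$ and the evenness of $(1-s^{2})^{\alpha}$: every potential term from the origin is cancelled by a trivial zero of the reciprocal Gamma factor. The only technical care needed is in the second step---arranging the remainder $\psi$ to be compactly supported strictly inside $(0,\infty)$ so that the integration-by-parts boundary terms vanish and $L$ has smooth coefficients on $\operatorname{supp}\psi$. This is precisely why one first peels off the singular part near $0$ with Proposition~\ref{prop:Wong76a} rather than attempting to integrate by parts on the whole interval.
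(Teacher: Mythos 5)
Your proof is correct, and its conceptual core --- that $K=\varnothing$ because $\mu=\nu+1$ forces $\tfrac12(\mu+k-\nu-1)=\ell\in\mathbb{N}_{0}$ for every $k=2\ell$ with $c_{k}\ne0$ --- is exactly the paper's one-line argument. The only divergence is in how the contribution away from $s=0$ is disposed of: the paper does not decompose the integral at all, since Wong's theorem in Section~\ref{sec:Wong's-result} is stated for $\int_{0}^{\infty}\varphi(s)J_{\nu}(rs)\,{\rm d}s$ and conditions $(\Phi_{1})$--$(\Phi_{3})$ hold with $m$ arbitrary for a smooth $\varphi$ of bounded support, so the vanishing of every coefficient already yields $o(r^{-m})$ for the full transform. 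You instead peel off $\psi=(1-\chi_{0})\varphi$ and kill it by iterating integration by parts against the Bessel operator $L=\partial_{s}^{2}+s^{-1}\partial_{s}-\nu^{2}s^{-2}$; this is a correct, self-contained, and slightly more elementary treatment of the tail (it avoids invoking Wong's machinery where nothing singular happens), at the cost of an extra step the paper gets for free. One cosmetic slip: in your parenthetical you call $-\ell$ a ``pole of $1/\Gamma$''; it is a pole of $\Gamma$, hence a zero of $1/\Gamma$, which is what actually annihilates the term.
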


\begin{proof}
The set $K$ is empty in this case, since we have $\mu=\nu+1$ and
$k$ is even if $c_{k}\ne0$.
\end{proof}

Next we consider the contribution of the singularities at $s=1$
to finite Hankel transforms.
\begin{Proposition}
\label{prop:1112}Let $N$ be a nonnegative integer and $\Lambda$
be a complex number with $\operatorname{Re}\Lambda\ge N$. Assume that
$\psi(t)$ is an infinitely differentiable function in $0<t<1$ such
that $\psi(t)=0$ in~${0<t<\varepsilon}$ and that $\psi^{(k)}(t)$
is integrable for $0\le k\le N$. Set $\phi(t)=(1-t)^{\Lambda}\psi(t)$.
Then $\phi\big(s^{2}\big)$ is an infinitely differentiable function in $0<s<1$
and if $\nu\ge-N$, we have
\[
\int_{0}^{1}s^{\nu+1}\phi\big(s^{2}\big)J_{\nu}(rs){\rm d}s=o\big(r^{-(N+1/2)}\big),\qquad \text{as}\quad r\to\infty.
\]
\end{Proposition}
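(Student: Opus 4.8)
The plan is to reduce the order of the Bessel function by repeated integration by parts, each step producing a factor $r^{-1}$ and transferring one derivative onto $\phi(s^2)$, and then to estimate the resulting integral using the size of the Bessel function together with the Riemann--Lebesgue lemma. The basic tool is the recurrence $\frac{d}{ds}\bigl(s^{\mu+1}J_{\mu+1}(rs)\bigr)=r\,s^{\mu+1}J_\mu(rs)$, valid for every $\mu$, which I rewrite as $s^{\mu+1}J_\mu(rs)=r^{-1}\frac{d}{ds}\bigl(s^{\mu+1}J_{\mu+1}(rs)\bigr)$. Smoothness of $\phi(s^2)$ on $(0,1)$ is immediate, since $(1-t)^{\Lambda}$ is real analytic on $(-\infty,1)$ and $\psi$ is $C^\infty$ on $(0,1)$. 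The decisive bookkeeping device is the operator $D=s^{-1}\frac{d}{ds}$: because the argument of $\phi$ is $s^2$, one checks by induction that $D^k\bigl(\phi(s^2)\bigr)=2^k\phi^{(k)}(s^2)$, so that differentiating repeatedly produces genuine derivatives of $\phi$ instead of a proliferation of negative powers of $s$. This is precisely why the statement is phrased with the argument $s^2$.

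With $a(s)=\phi(s^2)$, I would integrate by parts $N$ times. Using $(D^{k-1}a)'\,s^{\nu+k}=(D^k a)\,s^{\nu+k+1}$ and $s^{\nu+k+1}J_{\nu+k}(rs)=r^{-1}\frac{d}{ds}\bigl(s^{\nu+k+1}J_{\nu+k+1}(rs)\bigr)$ at the $k$-th stage, an easy induction gives
\begin{align*}
\int_0^1 a(s)\,s^{\nu+1}J_\nu(rs)\,ds
&=\sum_{k=1}^{N}\frac{(-1)^{k-1}}{r^{k}}\Bigl[(D^{k-1}a)(s)\,s^{\nu+k}J_{\nu+k}(rs)\Bigr]_0^1\\
&\quad+\frac{(-1)^{N}2^{N}}{r^{N}}\int_0^1\phi^{(N)}(s^2)\,s^{\nu+N+1}J_{\nu+N}(rs)\,ds.
\end{align*}
Since $\psi(t)=0$ for $0<t<\varepsilon$, the function $a$ and all its derivatives vanish near $s=0$, so every endpoint contribution at $s=0$ is zero; the constraint $\nu\ge -N$ keeps the orders $\nu+k$ ($0\le k\le N$) and the weights $s^{\nu+k}$ in the range for which the reduction and the large-argument estimates below apply without further care.

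The heart of the matter is to show that each boundary term at $s=1$ vanishes, i.e.\ that $\lim_{t\to1^-}\phi^{(j)}(t)=0$ for $0\le j\le N-1$. By Leibniz,
\[
\phi^{(j)}(t)=\sum_{i=0}^{j}\binom{j}{i}\,\frac{d^{\,j-i}}{dt^{\,j-i}}(1-t)^{\Lambda}\,\psi^{(i)}(t),\qquad \frac{d^{\,j-i}}{dt^{\,j-i}}(1-t)^{\Lambda}=c_{j,i}\,(1-t)^{\Lambda-(j-i)},
\]
and $\operatorname{Re}\bigl(\Lambda-(j-i)\bigr)\ge N-j+i\ge 1$ for $j\le N-1$, so every power $(1-t)^{\Lambda-(j-i)}$ tends to $0$. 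The subtle point is that $\psi^{(i)}$ is only assumed integrable; however, integrating $\psi^{(N)}\in L^1$ upward shows successively that $\psi^{(N-1)},\dots,\psi$ each have a finite limit at $t=1$, hence are bounded near $t=1$. Thus for $i\le j\le N-1$ the factor $\psi^{(i)}(t)$ is bounded while $(1-t)^{\Lambda-(j-i)}\to0$, giving $\phi^{(j)}(t)\to0$. The same computation with $j=N$ exhibits $\phi^{(N)}$ as a sum of bounded powers (now $\operatorname{Re}(\Lambda-(N-i))\ge0$) times the integrable functions $\psi^{(i)}$ ($i\le N$), so $\phi^{(N)}$, and hence $\phi^{(N)}(s^2)$, is integrable on $(0,1)$. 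I expect this bootstrap of the regularity of $\psi$ at the singular endpoint to be the main obstacle.

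It then remains to estimate $\mathcal I(r)=\int_0^1\phi^{(N)}(s^2)\,s^{\nu+N+1}J_{\nu+N}(rs)\,ds$. Because the integrand is supported in $s\ge\sqrt{\varepsilon}$, one has $rs\ge r\sqrt\varepsilon\ge 1$ for large $r$, so the uniform bound $|J_{\nu+N}(z)|\le C z^{-1/2}$ yields $\mathcal I(r)=O(r^{-1/2})$ and already gives $O\bigl(r^{-(N+1/2)}\bigr)$. To upgrade to little-$o$, I would insert the asymptotic expansion $J_{\nu+N}(z)=\sqrt{2/(\pi z)}\,\cos(z-\gamma)+O\bigl(z^{-3/2}\bigr)$ with $\gamma=\frac{\pi}{2}(\nu+N)+\frac{\pi}{4}$: the remainder contributes $O(r^{-3/2})=o(r^{-1/2})$, while the main term equals $\sqrt{2/(\pi r)}\int_0^1\phi^{(N)}(s^2)\,s^{\nu+N+1/2}\cos(rs-\gamma)\,ds$ with an $L^1$ amplitude, whence the Riemann--Lebesgue lemma makes the integral $o(1)$. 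Therefore $\mathcal I(r)=o(r^{-1/2})$ and $\int_0^1 s^{\nu+1}\phi(s^2)J_\nu(rs)\,ds=o\bigl(r^{-(N+1/2)}\bigr)$, as required.
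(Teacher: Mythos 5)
Your proof is correct and follows essentially the same route as the paper: the recurrence $\bigl(z^{\nu+1}J_{\nu+1}(z)\bigr)'=z^{\nu+1}J_{\nu}(z)$ drives $N$ integrations by parts giving $I_{0}=(-2/r)^{N}I_{N}$, and the final integral is handled by the large-argument cosine asymptotic of the Bessel function plus the Riemann--Lebesgue lemma. The only difference is that you spell out the bootstrap showing $\psi^{(i)}$ is bounded near $t=1$ and $\phi^{(N)}$ is integrable, details the paper asserts without comment.
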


\begin{proof}
We have
\[
\phi^{(k)}(t)=\sum_{j=0}^{k}\binom{k}{j}\Lambda(\Lambda-1)\cdots(\Lambda-j+1)(-1)^{j}(1-t)^{\Lambda-j}\psi^{(k-j)}(t).
\]
Assume $N\ge1$. We have $\text{\ensuremath{\phi^{(k)}(1)=0}}$ if
$k\le N-1$. Set
\[
I_{k}=\int_{0}^{1}s^{\nu+k+1}\phi^{(k)}\big(s^{2}\big)J_{\nu+k}(rs){\rm d}s,\qquad k=0,1,2,\dots,N.
\]
The recurrence relation $\bigl(z^{\nu+1}J_{\nu+1}(z)\bigr)'=z^{\nu+1}J_{\nu}(z)$
(see, for example, \cite[formula~(10.6.6)]{DLMF}) yields
\[
\frac{{\rm d}}{{\rm d}s}\bigl\{ s^{\nu+1}J_{\nu+1}(rs)\bigr\} =rs^{\nu+1}J_{\nu}(rs).\label{eq:ladder operator}
\]
Combining this formula and integration by parts, we get
\begin{align}
I_{k} & =\frac{1}{r}\int_{0}^{1}\phi^{(k)}\big(s^{2}\big)\cdot rs^{\nu+k+1}J_{\nu+k}(rs){\rm d}s\nonumber \\ &=\frac{1}{r}\int_{0}^{1}\phi^{(k)}\big(s^{2}\big)\frac{{\rm d}}{{\rm d}s}\big\{ s^{\nu+k+1}J_{\nu+k+1}(rs)\big\} {\rm d}s
 =-\frac{2}{r}I_{k+1}\label{eq:recurrence}
\end{align}
if $k\le N-1$. We obtain
\begin{equation}
I_{0}=\left(-\frac{2}{r}\right)^{N}I_{N}\label{eq:1112a}
\end{equation}
for $N\ge1$. Notice that \eqref{eq:1112a} holds for $N=0$ as well.

On the other hand, it is well known that
\begin{align}
J_{\alpha}(z) & =\frac{2^{1/2}}{\pi^{1/2}}z^{-1/2}\cos\left(z-\frac{\alpha\pi}{2}-\frac{\pi}{4}\right)+O\big(z^{-3/2}\big)\label{eq:besselasymp J}
\end{align}
as $\mathbb{R}\ni z\to\infty$ (see, for example, \cite[formula~(10.17.3)]{DLMF}).
By \eqref{eq:besselasymp J} and the boundedness of $J_{\nu+N}(z)$
as~${z\to+0}$, there exists a bounded function~$R(z)$ $(0<z<\infty)$
such that
\[
J_{\nu+N}(z)=\frac{2^{1/2}}{\pi^{1/2}}z^{-1/2}\cos\left(z-\frac{(\nu+N)\pi}{2}-\frac{\pi}{4}\right)+z^{-3/2}R(z).\label{eq:Bessel asymp}
\]
We have
\begin{align*}
I_{N} ={}&\frac{2^{1/2}}{\pi^{1/2}}r^{-1/2}\int_{0}^{1}\phi^{(N)}\big(s^{2}\big)s^{\nu+N+1/2}\cos\left(rs-\frac{(\nu+N)\pi}{2}-\frac{\pi}{4}\right){\rm d}s\\
 & +r^{-3/2}\int_{0}^{1}\phi^{(N)}\big(s^{2}\big)s^{\nu+N-1/2}R(rs){\rm d}s.
\end{align*}
The first term in the right hand side is of order \smash{$o\big(r^{-1/2}\big)$} as
$r\to\infty$ by the Riemann--Lebesgue lemma. The second term is of
order \smash{$O\big(r^{-3/2}\big)$} by the boundedness of $R$. We have shown \smash{$I_{N}=o\big(r^{-1/2}\big)$}
and the combination of it and \eqref{eq:1112a} gives \smash{$I_{0}=o\big(r^{-(N+1/2)}\big)$}.
\end{proof}

\begin{Proposition}\label{prop:1112b}Let $N$ be a nonnegative integer and $\nu$,
$\lambda_{0},\dots,\lambda_{m}$, $\Lambda$, $a_{0},\dots, a_{m}$ be complex numbers. Assume $\operatorname{Re}\nu>-1$,
$-1<\operatorname{Re}\lambda_{0}<\operatorname{Re}\lambda_{1}<\cdots<\operatorname{Re} \lambda_{m}<\operatorname{Re} \Lambda$,
$N\le\operatorname{Re} \Lambda$, $\operatorname{Re} \lambda_{0}\le N-1$. Assume
that the function $\phi(t)$ in $0<t<1$ satisfies
\[
\phi(t)=\sum_{k=0}^{m}a_{k}(1-t)^{\lambda_{k}}+(1-t)^{\Lambda}\psi(t),\qquad 1-2\varepsilon<t<1,
\]
where $a_{0}\ne0$ and $\psi(t)$ is an infinitely differentiable
function in $0<t<1$ such that $\psi^{(k)}(t)$ is integrable in $1-2\varepsilon<t<1$
for $0\le k\le N$.

Let $\chi_{1}(t)$ be an infinitely differentiable function such that
$\chi_{1}(t)=0$ in $0<t\le1-2\varepsilon$ and~${\chi_{1}(t)=1}$ in
$1-\varepsilon\le t<1$. Set\footnote{Notice that the cutoff is incomplete in the sense that $\chi_{1}(t)$
does not appear in the summation term.}
\[
\tilde{\phi}(t)=\sum_{k=0}^{m}a_{k}(1-t)^{\lambda_{k}}+\chi_{1}(t)(1-t)^{\Lambda}\psi(t),\qquad 0<t<1.
\]
Then as $r\to\infty$,
\begin{align*}
 \int_{0}^{1}s^{\nu+1}\tilde{\phi}\big(s^{2}\big)J_{\nu}(rs){\rm d}s
 ={}&a_{0}\frac{2^{\lambda_{0}+1/2}}{\pi^{1/2}}\Gamma(\lambda_{0}+1)r^{-(\lambda_{0}+3/2)}\cos\left(r-\frac{\pi}{2}(\nu+\lambda_{0}+1)-\frac{\pi}{4}\right)\\
 & +o\big(r^{-\operatorname{Re}(\lambda_{0}+3/2)}\big).
\end{align*}
\end{Proposition}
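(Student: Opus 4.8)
The plan is to integrate $\tilde\phi$ against the kernel $s^{\nu+1}J_{\nu}(rs)$ term by term, evaluating the finitely many algebraic singularities $(1-s^{2})^{\lambda_{k}}$ in closed form and absorbing the remaining smooth-cutoff piece into Proposition~\ref{prop:1112}. First I would split
\[
\int_{0}^{1}s^{\nu+1}\tilde\phi\big(s^{2}\big)J_{\nu}(rs)\,{\rm d}s
=\sum_{k=0}^{m}a_{k}\int_{0}^{1}s^{\nu+1}\big(1-s^{2}\big)^{\lambda_{k}}J_{\nu}(rs)\,{\rm d}s
+\int_{0}^{1}s^{\nu+1}\chi_{1}\big(s^{2}\big)\big(1-s^{2}\big)^{\Lambda}\psi\big(s^{2}\big)J_{\nu}(rs)\,{\rm d}s,
\]
noting that each integral on the right converges because $\operatorname{Re}\nu>-1$ and $\operatorname{Re}\lambda_{k}\ge\operatorname{Re}\lambda_{0}>-1$.

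For the last integral I would set $\tilde\psi(t)=\chi_{1}(t)\psi(t)$, so that $\chi_{1}(t)(1-t)^{\Lambda}\psi(t)=(1-t)^{\Lambda}\tilde\psi(t)$. Here $\tilde\psi$ is smooth in $0<t<1$, vanishes on $0<t\le1-2\varepsilon$ (hence near $t=0$), and $\tilde\psi^{(k)}$ is integrable for $0\le k\le N$ by the Leibniz rule, since $\chi_{1}$ and its derivatives are bounded and each $\psi^{(k)}$ is integrable on $1-2\varepsilon<t<1$. Because $N\le\operatorname{Re}\Lambda$ and $N\ge1$ (the constraints $-1<\operatorname{Re}\lambda_{0}\le N-1$ force $N\ge1$, whence $\operatorname{Re}\nu>-1\ge-N$), Proposition~\ref{prop:1112} applies and gives $o\big(r^{-(N+1/2)}\big)$; since $\operatorname{Re}\lambda_{0}\le N-1$ yields $N+1/2\ge\operatorname{Re}(\lambda_{0}+3/2)$, this term is $o\big(r^{-\operatorname{Re}(\lambda_{0}+3/2)}\big)$.

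For each algebraic term I would invoke Sonine's first finite integral (see, e.g., \cite{DLMF}),
\[
\int_{0}^{1}s^{\nu+1}\big(1-s^{2}\big)^{\lambda}J_{\nu}(rs)\,{\rm d}s
=\frac{2^{\lambda}\Gamma(\lambda+1)}{r^{\lambda+1}}J_{\nu+\lambda+1}(r),
\qquad \operatorname{Re}\nu>-1,\ \operatorname{Re}\lambda>-1,
\]
and then substitute the Bessel asymptotic \eqref{eq:besselasymp J} with order $\alpha=\nu+\lambda_{k}+1$. The $k=0$ term reproduces precisely the asserted leading cosine term, with a remainder of size $O\big(r^{-\operatorname{Re}\lambda_{0}-5/2}\big)=o\big(r^{-\operatorname{Re}(\lambda_{0}+3/2)}\big)$; for $1\le k\le m$ the whole contribution has magnitude $r^{-\operatorname{Re}(\lambda_{k}+3/2)}$, which is $o\big(r^{-\operatorname{Re}(\lambda_{0}+3/2)}\big)$ since $\operatorname{Re}\lambda_{k}>\operatorname{Re}\lambda_{0}$. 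Collecting the single surviving main term and these error bounds gives the assertion.

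I expect the only non-routine ingredient to be the closed evaluation of $\int_{0}^{1}s^{\nu+1}(1-s^{2})^{\lambda}J_{\nu}(rs)\,{\rm d}s$, which is not established in the excerpt and must be cited; once it is in hand the argument is bookkeeping of decay orders, where the strict ordering $\operatorname{Re}\lambda_{0}<\operatorname{Re}\lambda_{1}<\cdots$ together with $\operatorname{Re}\lambda_{0}+1\le N\le\operatorname{Re}\Lambda$ guarantees that exactly the $k=0$ algebraic singularity contributes to the stated order.
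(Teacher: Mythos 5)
Your proposal is correct and follows essentially the same route as the paper: split off the algebraic terms, evaluate each via Sonine's first finite integral $\int_{0}^{1}s^{\nu+1}\bigl(1-s^{2}\bigr)^{\lambda}J_{\nu}(rs)\,{\rm d}s=2^{\lambda}\Gamma(\lambda+1)r^{-(\lambda+1)}J_{\nu+\lambda+1}(r)$ (the paper's equation~\eqref{eq:GR6.567.1}, cited from Erd\'elyi/Gradshteyn--Ryzhik), absorb the cutoff piece into Proposition~\ref{prop:1112}, and finish with the Bessel asymptotic~\eqref{eq:besselasymp J}. Your verification that $N\ge1$ and that $\tilde\psi=\chi_{1}\psi$ meets the hypotheses of Proposition~\ref{prop:1112} only makes explicit what the paper leaves implicit.
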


\begin{proof}
By \cite[formula~(26(33)a)]{Erdelyi-Table} or \cite[formula~(6.567.1)]{GR}, we have
\begin{align}
\int_{0}^{1}s^{\nu+1}\bigl(1-s^{2}\bigr)^{\alpha}J_{\nu}(rs){\rm d}s & =2^{\alpha}\Gamma(\alpha+1)r^{-(\alpha+1)}J_{\nu+\alpha+1}(r),\label{eq:GR6.567.1}
\end{align}
if $r>0$, $\operatorname{Re}\nu>-1$, $\operatorname{Re}\alpha>-1$.\footnote{If we set $s=\sin\theta$, we get Sonine's first finite integral.
Two methods of its evaluation are given in \cite[formula~(12.11)\,(1)]{Watson}).
There is another proof of \eqref{eq:GR6.567.1}. When $\alpha$ is
a nonnegative integer, we can prove it following~\eqref{eq:recurrence}
in the proof of Proposition~\ref{prop:1112}. To generalize it to
$\operatorname{Re}\alpha>-1$, we divide the equality by $\Gamma(\alpha+1)$
and apply the Carlson's theorem \cite[p.~110]{AAR}. Boundedness
is guaranteed by Poisson's integral representation of Bessel functions~\cite[formula~(10.9.4)]{DLMF}.} We employ this formula and Proposition~\ref{prop:1112} to obtain
\begin{align*}
\int_{0}^{1}s^{\nu+1}\tilde{\phi}\big(s^{2}\big)J_{\nu}(rs){\rm d}s= \sum_{k=0}^{m}a_{k}2^{\lambda_{k}}\Gamma(\lambda_{k}+1)r^{-(\lambda_{k}+1)}J_{\nu+\lambda_{k}+1}(r) +o\big(r^{-(N+1/2)}\big).
\end{align*}
We finish the proof by using \eqref{eq:besselasymp J}.
\end{proof}

Now we give our main results.
\begin{Theorem}
\label{thm:smooth}Let $n\ge2$ be an integer. Let $\varphi(s)$ be
an infinitely differentiable function in the open interval $(0,1)$
and $\phi(t)$ $(0<t<1)$ be the function such that
\[
\varphi(s)=s^{n/2}\phi\big(s^{2}\big),\qquad0<s<1.
\]
Set $\nu=n/2-1$. If $\mu$ satisfies \eqref{eq:1230a}, $\varphi(s)$
satisfies \eqref{eq:1230b}, \eqref{eq:1230c} and $\phi(t)$ satisfies
the conditions of Proposition {\rm \ref{prop:1112b}}, then
\begin{equation}
f(x)=|x|^{-n/2}\varphi(|x|)\chi_{[0,1]}(|x|)=\phi\bigl(|x|^{2}\bigr)\chi_{[0,1]}(|x|),\qquad x\in\mathbb{R}^{n}\label{eq:f(x)}
\end{equation}
is an invertible distribution. Here $\chi_{[0,1]}(\cdot)$ is the
indicator function of the interval $[0,1]$.
\end{Theorem}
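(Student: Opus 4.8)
The plan is to turn invertibility into a pointwise lower bound on a single finite Hankel transform, and then to read off that bound from the two endpoint expansions proved above. By Theorem~\ref{thm:Stein-Weiss} and the Remark following it, using $s^{n/2}f_0(s)=\varphi(s)$ with $f_0(s)=|s|^{-n/2}\varphi(s)$, the Fourier transform of $f$ is the even entire function
\[
\hat f(\zeta)=p\bigl(\sqrt{\zeta^2}\bigr),\qquad p(z)=(2\pi)^{n/2}z^{-\nu}H(z),\qquad H(z)=\int_0^1\varphi(s)J_\nu(sz)\,{\rm d}s,
\]
with $\nu=n/2-1$; the integral converges and defines an entire function because $\operatorname{Re}(\mu+\nu)>-1$ by \eqref{eq:1230a}. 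Since $n\ge2$ gives $\alpha:=\nu\ge0$, I would set $q(x)=x^\nu p(x)=(2\pi)^{n/2}H(x)$ and invoke Proposition~\ref{prop:q single variable}: it suffices to verify the lower bound \eqref{eq:slowly decreasing q}, i.e.\ that for a small $C>0$ and large $A,B>0$ one has $\sup\{|H(y)|;\,y>0,\,|y-x|<B\}>Cx^{-A}$ for all large $x$. This forces $p\bigl(\sqrt{\zeta^2}\bigr)=\hat f$ to be slowly decreasing, and since $f\in\mathcal E'(\mathbb R^n)$ (local integrability near the origin follows again from $\operatorname{Re}\mu>-n/2$), Definition~\ref{def:invertible} then yields invertibility.

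Next I would separate the two singular endpoints by writing $\varphi(s)=\chi_0(s)\varphi(s)+s^{\nu+1}\tilde\phi\bigl(s^2\bigr)+g(s)$, where $\tilde\phi$ is the function of Proposition~\ref{prop:1112b} and $g$ is defined as the remainder. One checks that $g$ is smooth on $(0,1)$, vanishes near $s=1$ (there $\phi\bigl(s^2\bigr)=\tilde\phi\bigl(s^2\bigr)$ and $\chi_0=0$), and near $s=0$ equals $-s^{\nu+1}\sum_k a_k\bigl(1-s^2\bigr)^{\lambda_k}$, a product of $s^{\nu+1}$ with an even analytic factor. Splitting $g=\chi_0g+(1-\chi_0)g$, the interior piece lies in $C_c^\infty\bigl((0,1)\bigr)$, so repeated integration by parts against $\frac{{\rm d}}{{\rm d}s}\{s^{\nu+1}J_{\nu+1}(rs)\}=rs^{\nu+1}J_\nu(rs)$ makes its Hankel transform $o\bigl(r^{-A}\bigr)$ for every $A$; the endpoint piece $\chi_0g$ involves only even powers $s^{\nu+1+2j}$, so its set $K$ is empty exactly as in Proposition~\ref{prop:Wong76b}, and its transform is likewise $o\bigl(r^{-A}\bigr)$. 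Hence $\int_0^1 g(s)J_\nu(rs)\,{\rm d}s$ is rapidly decreasing, and Proposition~\ref{prop:Wong76a} applies to $H_0=\int_0^1\chi_0\varphi J_\nu$ while Proposition~\ref{prop:1112b} applies to $\tilde H=\int_0^1 s^{\nu+1}\tilde\phi\bigl(s^2\bigr)J_\nu$.

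Combining the two propositions and the rapidly decreasing remainder gives
\[
H(r)=P\,r^{-(\mu+k_0+1)}+Q\,r^{-(\lambda_0+3/2)}\cos(r-\theta)+o\bigl(r^{-\rho}\bigr),
\]
where $Q=a_0\,2^{\lambda_0+1/2}\pi^{-1/2}\Gamma(\lambda_0+1)$, $\theta=\frac{\pi}{2}(\nu+\lambda_0+1)+\frac{\pi}{4}$, $\rho=\min\{\operatorname{Re}(\mu+k_0+1),\operatorname{Re}(\lambda_0+3/2)\}$, and $P$ is the coefficient of Proposition~\ref{prop:Wong76a}, present precisely when $K\ne\varnothing$. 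The key point is that both leading coefficients are nonzero: $Q\ne0$ because $a_0\ne0$ and $\Gamma(\lambda_0+1)\ne0$ (as $\operatorname{Re}\lambda_0>-1$), and $P\ne0$ because $c_{k_0}\ne0$ and the defining condition $k_0\in K$ says $\frac12(-\mu-k_0+\nu+1)$ is not a pole of $\Gamma$, so $1/\Gamma\ne0$. The modulus of the first term is the pure power $r^{-\operatorname{Re}(\mu+k_0+1)}$ (the imaginary part of the exponent only contributes a unimodular factor), whereas the second term oscillates.

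Finally I would extract the lower bound by factoring $r^{\rho}H(r)=B_0(r)+B_1(r)\cos(r-\theta)+o(1)$, where $|B_0|$ stays bounded below iff $\operatorname{Re}(\mu+k_0+1)=\rho$ (otherwise $B_0\to0$) and $|B_1|$ stays bounded below iff $\operatorname{Re}(\lambda_0+3/2)=\rho$ (otherwise $B_1\to0$); at least one is bounded below. If only the power term attains the rate, I take $y=x$ and use $|H(x)|\gtrsim x^{-\rho}$. Otherwise I use a two-point argument: within $[x,x+2\pi]$ choose $r_\pm$ with $\cos(r_\pm-\operatorname{Re}\theta)=\pm1$, so $\cos(r_\pm-\theta)=\pm\cosh(\operatorname{Im}\theta)$; the two values of $r^{\rho}H$ then differ by approximately $2B_1\cosh(\operatorname{Im}\theta)$, of modulus at least $2|B_1|>0$, so one of them has modulus at least $|B_1|$. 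In every case some $y$ within distance $2\pi<B$ of $x$ satisfies $|H(y)|\ge\frac12 c\,x^{-\rho}$ for large $x$, which is \eqref{eq:slowly decreasing q} with $A>\rho$ and $C$ small. The hard part will be exactly this last step in the resonant case $\operatorname{Re}(\mu+k_0+1)=\operatorname{Re}(\lambda_0+3/2)$, where the slowly varying contribution from $s=0$ and the rapidly oscillating contribution from $s=1$ could conspire to cancel; the anti-cancellation argument above, combined with controlling the possibly complex phase through $\cosh(\operatorname{Im}\theta)\ge1$, is what rules this out. The careful tracking of the cut-offs in the decomposition, ensuring the remainder is genuinely rapidly decreasing, is the other place that demands attention.
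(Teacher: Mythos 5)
Your proposal is correct and follows essentially the same route as the paper's proof: isolate the $s=0$ contribution via Wong's expansion (Proposition~\ref{prop:Wong76a}, after splitting off the even powers $s^{\nu+1+2j}$ that Proposition~\ref{prop:Wong76b} kills), isolate the $s=1$ contribution via Proposition~\ref{prop:1112b}, discard a harmless remainder, and feed the resulting lower bound on the finite Hankel transform into Proposition~\ref{prop:q single variable}. The only differences are minor matters of execution: the paper disposes of the remainder with Proposition~\ref{prop:u*v}\,(v) rather than by estimating its Hankel transform directly, and in the resonant case $\operatorname{Re}(\mu+k_0+1)=\operatorname{Re}(\lambda_0+3/2)$ it simply evaluates at points where the cosine factor vanishes, of which your two-point anti-cancellation argument is a more explicit (and, for non-real $\lambda_0$, more careful) version.
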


\begin{proof}
Set $\nu=\nu(n)=n/2-1$ and
\[
\tilde{\varphi}(s)=\chi_{0}(s)\left\{ \varphi(s)-s^{n/2}\sum_{k=0}^{m}a_{k}\bigl(1-s^{2}\bigr)^{\lambda_{k}}\right\} .
\]
Notice that
\[
{\displaystyle \tilde{\varphi}{}^{(j)}(s)\sim\sum_{k=0}^{\infty}c_{k}\frac{{\rm d}^{j}}{{\rm d}s^{j}}s^{\mu+k}+\sum_{\ell=0}^{\infty}A_{\ell}\frac{{\rm d}^{j}}{{\rm d}s^{j}}s^{n/2+2\ell},\qquad s\to+0}
\]
for some $A_{\ell}$. The second sum, which corresponds to $s^{n/2}\sum_{k=0}^{m}a_{k}\big(1-s^{2}\big)^{\lambda_{k}}$,
does not contribute to the finite Hankel transform because of Proposition
\ref{prop:Wong76b}.

Let $K_{n}=K (\mu,\nu(n), \{ c_{k} \} _{k} )$ be
the set defined by \eqref{eq:set K} with $\nu=\nu(n)$. If $K_{n}\ne\varnothing$,
we have
\begin{align*}
H_{0} & :=\int_{0}^{1}\tilde{\varphi}(s)J_{\nu(n)}(rs){\rm d}s =c_{k_{0}}\frac{\Gamma\left(\frac{1}{2}(\mu+k_{0}+\nu(n)+1)\right)2^{\mu+k_{0}}}{\Gamma\left(\frac{1}{2}(-\mu-k_{0}+\nu(n)+1)\right)r^{\mu+k_{0}+1}} +o\big(r^{-\operatorname{Re}(\mu+k_{0}+1)}\big),
\end{align*}
and if $K_{n}=\varnothing$, $H_{0}$ decreases rapidly of arbitrary
order by~\eqref{eq:infinitely rapidly}. On the other hand, for $\tilde{\phi}$
defined in Proposition~\ref{prop:1112b}, we have
\begin{align*}
H_{1}:={} & \int_{0}^{1}s^{\nu(n)+1}\tilde{\phi}\big(s^{2}\big)J_{\nu(n)}(rs){\rm d}s\\
={} & a_{0}\frac{2^{\lambda_{0}+1/2}}{\pi^{1/2}}\Gamma(\lambda_{0}+1)r^{-(\lambda_{0}+3/2)}\cos\left(r-\frac{\pi}{2}(\nu(n)+\lambda_{0}+1)-\frac{\pi}{4}\right)
 +o\big(r^{-\operatorname{Re}(\lambda_{0}+3/2)}\big).
\end{align*}
We can prove that $H_{0}+H_{1}$ satisfies \eqref{eq:slowly decreasing q}
in the following way.

If $K_{n}=\varnothing$, $H_{1}$ is dominant. We assume $K_{n}\ne\varnothing$
from now on. If $\operatorname{Re}\mu+k_{0}+1<{\operatorname{Re} \lambda_{0}+3/2}$,
then $H_{0}$ is dominant. If $\operatorname{Re}\mu+k_{0}+1>\operatorname{Re} \lambda_{0}+3/2$,
then $H_{1}$ is dominant. If ${\operatorname{Re}\mu+k_{0}+1}={\operatorname{Re} \lambda_{0}+3/2}$,
we can pick up those $r$'s for which the cosine factor vanishes and
the contribution of~$H_{1}$ becomes irrelevant. In any case, $H_{0}+H_{1}$
satisfies~\eqref{eq:slowly decreasing q}. By Theorem~\ref{thm:Stein-Weiss}
and Proposition~\ref{prop:q single variable}, $|x|^{-n/2}\tilde{\varphi}(|x|)+\tilde{\phi}\bigl(|x|^{2}\bigr)$
is invertible.

Notice that $\varphi(s)-\tilde{\varphi}(s)-s^{n/2}\tilde{\phi}\big(s^{2}\big)$
vanishes near $s=0,1$ and
\begin{align*}
 & f(x)-|x|^{-n/2}\tilde{\varphi}(|x|)-\tilde{\phi}\bigl(|x|^{2}\bigr)\in\mathcal{C}_{0}^{\infty}(\mathbb{R}^{n}).
\end{align*}
By Proposition \ref{prop:u*v}\,(v), $f(x)$ is invertible.
\end{proof}
\begin{Example}
If $\lambda>-n/2$, $\rho>0$, then
$
f(x)=|x|^{\lambda-n/2}\bigl(1-|x|^{2}\bigr)^{\rho-1}\chi_{[0,1]}(|x|)
$
is an invertible distribution. This corresponds to the case of $\varphi(s)=s^{\lambda}\bigl(1-s^{2}\bigr)^{\rho-1}$
and $\phi(t)=t^{\lambda/2-n/4} (1-t )^{\rho-1}$.

The finite Hankel transform $\int_{0}^{1}s^{\lambda}\bigl(1-s^{2}\bigr)^{\rho-1}J_{\nu}(rs){\rm d}s$
can be written in terms of the generalized hypergeometric function
$_{2}F_{3}$ by \cite[formula~(6.569)]{GR} and it is good enough to prove invertibility.
The advantage of our method is that it is stable under small perturbations
and works even if no closed form expression is available.
\end{Example}

\begin{Remark}\label{rem:0 expansion}In \eqref{eq:1230b}, $j$ can be arbitrarily
large. This assumption can be relaxed in some cases. We give such
an example. Assume
\[
\varphi(s)\sim0=\sum_{k=0}^{\infty}0\cdot s^{\mu+k}
\]
as $s\to+0$, where $-n/2<\operatorname{Re}\mu\le-1$. We do not assume
that term by term differentiation is possible:\footnote{For example, $\varphi(s)={\rm e}^{-1/s}\sin {\rm e}^{1/s}$.}
 $m$ in $(\Phi_{1})$ is 0. Moreover, we have removed the assumption
$c_{0}\ne0$ and do not need the set~$K_{n}$. We keep all the other
assumptions of Theorem \ref{thm:smooth} unchanged. By~\eqref{eq:Wong}
and Remark~\ref{rem:c_0 ne 0}, we have $H_{0}=o(1)$. If $\operatorname{Re}\lambda_{0}\le-3/2$,
$H_{1}$ is dominant and $f(x)$ defined by \eqref{eq:f(x)} is invertible.
\end{Remark}

\begin{Theorem}
\label{thm:smooth2}Let $n\ge2$ be an integer and $N$ be a nonnegative
integer. Let $\varphi(s)$ be an infinitely differentiable function
in the open interval $(0,1)$ and $\phi(t)\,(0<t<1)$ be the function
such that~$\varphi(s)=s^{n/2}\phi\bigl(s^{2}\bigr)$. Set $\nu=n/2-1$. Suppose
that $\mu$ satisfies \eqref{eq:1230a} and that $\varphi(s)$ satisfies
\eqref{eq:1230b}, \eqref{eq:1230c}. Moreover, suppose $\operatorname{Re}\Lambda\ge N$
and
\[
\phi(t)=(1-t)^{\Lambda}\psi(t),\qquad 1-2\varepsilon<t<1,
\]
where $\psi(t)$ is an infinitely differentiable function in $0<t<1$
such that $\psi^{(k)}(t)$ is integrable in~${1-2\varepsilon<t<1}$
for $0\le k\le N$.

If $K_{n}=K (\mu,n/2-1, \{ c_{k} \} _{k} )\ne\varnothing$
and $\operatorname{Re}(\mu+k_{0}+1/2)\le N$, then
\[
f(x)=|x|^{-n/2}\varphi(|x|)\chi_{[0,1]}(|x|)=\phi\bigl(|x|^{2}\bigr)\chi_{[0,1]}(|x|),\qquad x\in\mathbb{R}^{n}
\]
is an invertible distribution.
\end{Theorem}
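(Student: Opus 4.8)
The plan is to adapt the proof of Theorem~\ref{thm:smooth}, with the simplification that here the boundary $s=1$ contributes only a negligible term, so the dominant contribution always comes from the origin. Write $\nu=n/2-1$ and observe $\nu\ge0$ because $n\ge2$. First I would introduce the cutoffs $\tilde\varphi(s)=\chi_0(s)\varphi(s)$ and $\tilde\phi(t)=\chi_1(t)(1-t)^{\Lambda}\psi(t)$ and consider the radial function $g(x)=|x|^{-n/2}\tilde\varphi(|x|)+\tilde\phi(|x|^2)$. Since $n/2=\nu+1$, Theorem~\ref{thm:Stein-Weiss} gives $\hat g(\xi)=(2\pi)^{n/2}r^{-\nu}(H_0+H_1)$ with $r=|\xi|$, where $H_0=\int_0^1\tilde\varphi(s)J_\nu(rs)\,ds$ and $H_1=\int_0^1 s^{\nu+1}\tilde\phi(s^2)J_\nu(rs)\,ds$.

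Next I would read off the asymptotics of the two pieces. For $H_0$, since $K_n\ne\varnothing$, Proposition~\ref{prop:Wong76a} yields
\[
H_0=c_{k_0}\frac{\Gamma\left(\frac{1}{2}(\mu+k_0+\nu+1)\right)2^{\mu+k_0}}{\Gamma\left(\frac{1}{2}(-\mu-k_0+\nu+1)\right)r^{\mu+k_0+1}}+o\big(r^{-\operatorname{Re}(\mu+k_0+1)}\big),\qquad k_0=\min K_n.
\]
The leading coefficient is genuinely nonzero: $c_{k_0}\ne0$ by the definition of $K_n$, the numerator $\Gamma$ is finite and nonzero since $\operatorname{Re}(\mu+k_0+\nu+1)>k_0\ge0$, and $k_0\in K_n$ forces $\frac{1}{2}(\mu+k_0-\nu-1)\notin\mathbb{N}_0$, so the denominator $\Gamma$ has no pole. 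For $H_1$, the function $\chi_1(t)(1-t)^{\Lambda}\psi(t)$ vanishes for $t\le1-2\varepsilon$ and has integrable derivatives up to order $N$; together with $\operatorname{Re}\Lambda\ge N$ and $\nu\ge0\ge-N$, Proposition~\ref{prop:1112} gives $H_1=o\big(r^{-(N+1/2)}\big)$.

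The decisive step is the comparison governed by the hypothesis $\operatorname{Re}(\mu+k_0+1/2)\le N$, equivalently $\operatorname{Re}(\mu+k_0+1)\le N+1/2$. This guarantees that the modulus $r^{-\operatorname{Re}(\mu+k_0+1)}$ of the leading term of $H_0$ decays no faster than the bound $r^{-(N+1/2)}$ on $H_1$, so $H_0$ is dominant and $H_0+H_1=c_{k_0}(\cdots)r^{-(\mu+k_0+1)}(1+o(1))$. In contrast to the resonant case of Theorem~\ref{thm:smooth}, no cosine factor occurs, so the modulus of the dominant term is a clean power of $r$ without zeros; hence the lower bound \eqref{eq:slowly decreasing q} for $H_0+H_1$ (taking $\alpha=\nu$ to absorb the factor $r^{-\nu}$ in $\hat g$) holds for all large $r$ with no further effort. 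Theorem~\ref{thm:Stein-Weiss} and Proposition~\ref{prop:q single variable} then show that $g$ is invertible. Finally I would verify $f-g\in\mathcal{C}_0^{\infty}(\mathbb{R}^n)$: near $s=0$ one has $\chi_0(s)=1$ and $\chi_1(s^2)=0$, while near $s=1$ one has $\chi_0(s)=0$ and $\chi_1(s^2)=1$, so the singular parts of $f$ at the origin and at $|x|=1$ are reproduced exactly by $g$ and the difference is smooth and compactly supported; Proposition~\ref{prop:u*v}\,(v) then transfers invertibility from $g$ to $f$. The only point needing care is the verification of \eqref{eq:slowly decreasing q}, i.e.\ confirming the dominance of the origin term, but this reduces to the elementary comparison of exponents supplied by the hypothesis, so I anticipate no serious obstacle.
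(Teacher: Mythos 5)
Your proposal is correct and follows essentially the same route as the paper: the paper's own proof is a two-line remark that invokes Propositions \ref{prop:Wong76a} and \ref{prop:1112} for $H_{0}$ and $H_{1}$ and compares the exponents via $\operatorname{Re}(\mu+k_{0}+1)\le N+1/2$, implicitly reusing the cutoff decomposition, Theorem \ref{thm:Stein-Weiss}, Proposition \ref{prop:q single variable} and Proposition \ref{prop:u*v}\,(v) exactly as you spell them out. Your added observations (that the equality case is harmless because $H_{1}$ is little-$o$, and that no oscillatory cosine factor needs to be handled) are accurate and consistent with the paper.
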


\begin{proof}
We have
\begin{align*}
H_{0} =c_{k_{0}}\frac{\Gamma\left(\frac{1}{2}(\mu+k_{0}+\nu(n)+1)\right)2^{\mu+k_{0}}}{\Gamma\left(\frac{1}{2}(-\mu-k_{0} +\nu(n)+1)\right)r^{\mu+k_{0}+1}}+o\big(r^{-\operatorname{Re}(\mu+k_{0}+1)}\big),\qquad
H_{1} =o\big(r^{-(N+1/2)}\big)
\end{align*}
by Propositions \ref{prop:Wong76a} and \ref{prop:1112}. Since $\operatorname{Re}(\mu+k_{0}+1)\le N+1/2$,
$H_{0}$ is dominant and $H_{1}$ is negligible.
\end{proof}
\begin{Theorem}
\label{thm:smooth3}Let $n\ge2$ be an integer. Let $\varphi(s)$
be an infinitely differentiable function in $s>0$ such that $\varphi(s)=0$
in $s\ge1$. Set $\nu=n/2-1$. Suppose that $\mu$ satisfies \eqref{eq:1230a}
and that $\varphi(s)$ satisfies \eqref{eq:1230b}, \eqref{eq:1230c}.
Then
\[
f(x)=|x|^{-n/2}\varphi(|x|)\chi_{[0,1]}(|x|),\qquad x\in\mathbb{R}^{n}
\]
is an invertible distribution if and only if $K_{n}=K (\mu,n/2-1, \{ c_{k} \} _{k} )$
is non-empty.
\end{Theorem}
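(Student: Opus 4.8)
The plan is to prove both implications by analyzing the single even entire function $p(z)$ for which $\hat f(\zeta)=p\bigl(\sqrt{\zeta^2}\bigr)$, as furnished by the Remark following Theorem~\ref{thm:Stein-Weiss}. Since $f_0(s)=s^{-n/2}\varphi(s)\chi_{[0,1]}(s)$ gives $s^{n/2}f_0(s)=\varphi(s)$, and $\nu=n/2-1$, for real $r>0$ one has $p(r)=(2\pi)^{n/2}r^{-\nu}\int_0^1\varphi(s)J_\nu(rs)\,ds$. The decisive simplification over Theorem~\ref{thm:smooth} is that $\varphi$, being $C^\infty$ on $(0,\infty)$ and identically zero on $[1,\infty)$, is flat at $s=1$; hence there is no boundary contribution and the entire asymptotics come from the origin. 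Writing $\varphi=\chi_0\varphi+(1-\chi_0)\varphi$, the function $(1-\chi_0)\varphi$ is smooth and compactly supported in $(0,\infty)$ (it vanishes near $s=0$ and is flat at $s=1$), so its Hankel transform is $o(r^{-A})$ for every $A$ by repeated integration by parts with $\frac{d}{ds}\{s^{\nu+1}J_{\nu+1}(rs)\}=rs^{\nu+1}J_\nu(rs)$, as in Proposition~\ref{prop:1112}. Therefore $\int_0^1\varphi(s)J_\nu(rs)\,ds$ has the same asymptotics as $\int_0^1\chi_0(s)\varphi(s)J_\nu(rs)\,ds$, which is governed by Proposition~\ref{prop:Wong76a}.

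For the ``if'' direction, assume $K_n\ne\varnothing$ and set $k_0=\min K_n$. By Proposition~\ref{prop:Wong76a} the leading term of $\int_0^1\varphi(s)J_\nu(rs)\,ds$ is $c_{k_0}\Gamma(\frac12(\mu+k_0+\nu+1))2^{\mu+k_0}\Gamma(\frac12(-\mu-k_0+\nu+1))^{-1}r^{-(\mu+k_0+1)}$. The membership $k_0\in K_n$ forces $\frac12(\mu+k_0-\nu-1)\notin\mathbb{N}_0$, so the denominator $\Gamma$ is finite, while $\operatorname{Re}(\mu+k_0+\nu+1)>0$ keeps the numerator $\Gamma$ off its poles, and $c_{k_0}\ne0$; thus the coefficient is a nonzero constant. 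Hence $p(r)=C_0r^{-\beta}+o\bigl(r^{-\operatorname{Re}\beta}\bigr)$ with $\beta=\nu+\mu+k_0+1$, $C_0\ne0$, and $\operatorname{Re}\beta=\operatorname{Re}(\mu+\nu+1)+k_0>0$ by \eqref{eq:1230a}. Setting $\alpha=\operatorname{Re}\beta>0$ and $q(x)=x^\alpha p(x)$ gives $|q(x)|\to|C_0|>0$, so \eqref{eq:slowly decreasing q} holds (take $y=x$ and any $C<|C_0|/2$). Proposition~\ref{prop:q single variable} then shows $p\bigl(\sqrt{\zeta^2}\bigr)=\hat f(\zeta)$ is slowly decreasing, i.e.\ $f$ is invertible.

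For the ``only if'' direction I argue the contrapositive. If $K_n=\varnothing$, then \eqref{eq:infinitely rapidly} together with the rapidly decreasing remainder from the first paragraph gives $\int_0^1\varphi(s)J_\nu(rs)\,ds=o(r^{-A})$ for every $A$, whence $p(y)=o(y^{-M})$ for every $M$ as $y\to+\infty$. I then use the real criterion Theorem~\ref{thm:invertible}\,(ii): since $\hat f(\eta)=p(|\eta|)$ for $\eta\in\mathbb{R}^n$ and, by radiality, $\{|\eta|:\eta\in\mathbb{R}^n,\,|\eta-\xi|<\rho\}=(\max(0,|\xi|-\rho),|\xi|+\rho)$ with $\rho=A\log(2+|\xi|)$, the left-hand side of the slow-decrease inequality equals $\sup\{|p(y)|:y>0,\,|y-|\xi||<\rho\}$. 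Fix $A$; for $x=|\xi|$ large the window lies in $[x/2,\infty)$, so choosing $M=A+1$ yields $\sup\le(x/2)^{-(A+1)}=2^{A+1}x^{-(A+1)}<2^{-A}x^{-A}\le(A+x)^{-A}$ once $x\ge 2^{2A+1}$. Thus for every $A$ the slow-decrease inequality fails for all large $\xi$, so $\hat f$ is not slowly decreasing and $f$ is not invertible.

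The main obstacle is conceptual rather than computational: one must be sure that rapid decay of $p$ \emph{along the positive real axis} suffices to defeat slow decrease in $\mathbb{C}^n$, even though the defining condition \eqref{eq:slowly decreasing f} permits complex excursions where $p$ could a priori be large. This is exactly what the equivalence of Theorem~\ref{thm:invertible}\,(i) and (ii) provides: invertibility may be tested on real points alone, and for a radial symbol the real neighborhood collapses to a one-dimensional window on which $p$ is rapidly decreasing. The only care needed elsewhere is the bookkeeping of the $r^{-\nu}$ prefactor (harmless since $\nu$ is a fixed real number) and the verification that $k_0\in K_n$ keeps both Gamma factors finite and nonzero.
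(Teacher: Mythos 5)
Your proof is correct and follows essentially the same route as the paper: kill the boundary contribution using the flatness of $\varphi$ at $s=1$ (Proposition~\ref{prop:1112} with arbitrary $N$), read off the origin contribution from Proposition~\ref{prop:Wong76a}, and conclude via Proposition~\ref{prop:q single variable} when $K_n\ne\varnothing$ and via the failure of the real criterion in Theorem~\ref{thm:invertible}\,(ii) when $K_n=\varnothing$. The paper compresses all of this into two sentences by deferring to Theorem~\ref{thm:smooth2}; your write-up merely makes explicit the details it leaves implicit, in particular the ``only if'' direction.
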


\begin{proof}
The quantity $\Lambda$ in Theorem~\ref{thm:smooth2} can be an arbitrarily
large integer and we have $H_{1}=o\bigl(r^{-A}\bigr)$, where~$A$ is arbitrarily
large. By Proposition~\ref{prop:Wong76a}, the invertibility of~$f(x)$
is equivalent to the non-emptiness of~$K_{n}$.
\end{proof}
\begin{Remark}
We can find a lot of invertible distributions by combining Theorems~\ref{thm:smooth}, \ref{thm:smooth2} and~\ref{thm:smooth3} with
Remark \ref{rem:invertible distributions} and Proposition \ref{prop:u*v}.
\end{Remark}

\begin{Remark}
We can formulate an invertibility theorem by using \cite{Wong1977}
instead of~\cite{Wong1976}. In~\cite{Wong1977}, the function $\varphi(s)$
is allowed to have an asymptotic expansion involving powers of logarithms.
\end{Remark}

\subsection*{Acknowledgements}
The authors would like to thank the anonymous referees who provided useful comments.
The first author is supported by JSPS KAKENHI Grant Number 21K03265.

\vspace{-1mm}

\pdfbookmark[1]{References}{ref}
\LastPageEnding


\begin{thebibliography}{99}
\footnotesize\itemsep=-0.5pt

\bibitem{Abramczuk}
Abramczuk W., A class of surjective convolution operators, \href{https://doi.org/10.2140/pjm.1984.110.1}{\textit{Pacific~J.
 Math.}} \textbf{110} (1984), 1--7.

\bibitem{AAR}
Andrews G.E., Askey R., Roy R., Special functions, \textit{Encyclopedia Math.
 Appl.}, Vol.~71, \href{https://doi.org/10.1017/CBO9781107325937}{Cambridge University Press}, Cambridge, 1999.

\bibitem{BD}
Berenstein C.A., Dostal M.A., On convolution equations.~{I}, in L'Analyse Harmonique dans le Domaine Complexe ({A}ctes {T}able
              {R}onde {I}nternat., {C}entre {N}at. {R}echerche {S}ci., {M}ontpellier, 1972),
 \textit{Lecture Notes in Math.}, Vol.~336, \href{https://doi.org/10.1007/BFb0065789}{Springer}, Berlin, 1973, 79--94.

\bibitem{Kakehi}
Christensen J., Gonzalez F., Kakehi T., Surjectivity of mean value operators on
 noncompact symmetric spaces, \href{https://doi.org/10.1016/j.jfa.2016.12.022}{\textit{J.~Funct. Anal.}} \textbf{272} (2017),
 3610--3646, \href{https://arxiv.org/abs/1608.01869}{arXiv:1608.01869}.

\bibitem{DK}
Debrouwere A., Kalmes T., Linear topological invariants for kernels of
 convolution and differential operators, \href{https://doi.org/10.1016/j.jfa.2023.109886}{\textit{J.~Funct. Anal.}} \textbf{284}
 (2023), 109886, 20~pages, \href{https://arxiv.org/abs/2204.11733}{arXiv:2204.11733}.

\bibitem{Ehrenpreis54}
Ehrenpreis L., Solution of some problems of division.~{I}. {D}ivision by a
 polynomial of derivation, \href{https://doi.org/10.2307/2372662}{\textit{Amer.~J. Math.}} \textbf{76} (1954),
 883--903.

\bibitem{Ehrenpreis56}
Ehrenpreis L., Solutions of some problems of division.~{III}. {D}ivision in the
 spaces, {${\mathcal D}'$}, {${\mathcal H}$}, {${\mathcal Q}_A$}, {${\mathcal
 O}$}, \href{https://doi.org/10.2307/2372464}{\textit{Amer.~J. Math.}} \textbf{78} (1956), 685--715.

\bibitem{Ehrenpreis60}
Ehrenpreis L., Solution of some problems of division.~{IV}. {I}nvertible and
 elliptic operators, \href{https://doi.org/10.2307/2372971}{\textit{Amer.~J. Math.}} \textbf{82} (1960), 522--588.

\bibitem{Erdelyi-Table}
Erd\'{e}lyi A., Magnus W., Oberhettinger F., Tricomi F.G., Tables of integral
 transforms. {V}ol.~{II}, McGraw-Hill Book Co., Inc., New York, 1954.

\bibitem{GR}
Gradshteyn I.S., Ryzhik I.M., Table of integrals, series, and products, 8th
 ed., Academic Press, Amsterdam, 2015.

\bibitem{Hormander62}
H\"ormander L., On the range of convolution operators, \href{https://doi.org/10.2307/1970269}{\textit{Ann. of Math.}}
 \textbf{76} (1962), 148--170.

\bibitem{Hormander2}
H\"ormander L., The analysis of linear partial differential operators.~{II}.
 Differential operators with constant coefficients, \textit{Grundlehren Math.
 Wiss.}, Vol. 257, \href{https://doi.org/10.1007/b138375}{Springer}, Berlin, 1983.

\bibitem{Lim}
Lim K.-T., The spherical mean value operators on {E}uclidean and hyperbolic
 spaces, Ph.D.~Thesis, {T}ufts University, Medford,~MA, 2012, available at \url{https://dl.tufts.edu/concern/pdfs/pz50h7447}.

\bibitem{Malgrange}
Malgrange B., Existence et approximation des solutions des \'{e}quations aux
 d\'{e}riv\'{e}es partielles et des \'{e}quations de convolution, \href{https://doi.org/10.5802/aif.65}{\textit{Ann.
 Inst. Fourier (Grenoble)}} \textbf{6} (1955/56), 271--355.

\bibitem{Musin}
Musin I.Kh., Perturbation of a surjective convolution operator, \href{https://doi.org/10.13108/2016-8-4-123}{\textit{Ufa
 Math.~J.}} \textbf{8} (2016), 123--130.

\bibitem{OY_Tsukuba}
Okada Y., Yamane H., Generalized spherical mean value operators on {E}uclidean
 space, \href{https://doi.org/10.21099/tkbjm/20214501037}{\textit{Tsukuba~J. Math.}} \textbf{45} (2021), 37--50,
 \href{https://arxiv.org/abs/2003.10005}{arXiv:2003.10005}.

\bibitem{DLMF}
Olver F.W.J., Olde~Daalhuis A.B., Lozier D.W., Schneider B.I., Boisvert R.F.,
 Clark C.W., Miller B.R., Saunders B.V., Cohl H.S., McClain M.A. (Editors), {NIST}
 {D}igital {L}ibrary of {M}athematical {F}unctions, {R}elease~1.2.0 of
 2024-03-15, \url{https://dlmf.nist.gov}.

\bibitem{SW}
Stein E.M., Weiss G., Introduction to {F}ourier analysis on {E}uclidean spaces,
 \textit{Princeton Math. Ser.}, Vol.~32, \href{https://doi.org/10.1515/9781400883899}{Princeton University Press},
 Princeton, NJ, 1971.

\bibitem{Watson}
Watson G.N., A treatise on the theory of {B}essel functions, Cambridge
 University Press, Cambridge, 1944.

\bibitem{Wong1976}
Wong R., Error bounds for asymptotic expansions of {H}ankel transforms,
 \href{https://doi.org/10.1137/0507061}{\textit{SIAM~J. Math. Anal.}} \textbf{7} (1976), 799--808.

\bibitem{Wong1977}
Wong R., Asymptotic expansions of {H}ankel transforms of functions with
 logarithmic singularities, \href{https://doi.org/10.1016/0898-1221(77)90084-0}{\textit{Comput. Math. Appl.}} \textbf{3} (1977),
 271--286.

\bibitem{Wong80correction}
Wong R., Error bounds for asymptotic expansions of integrals, \href{https://doi.org/10.1137/1022086}{\textit{SIAM
 Rev.}} \textbf{22} (1980), 401--435.

\end{thebibliography}
\end{document}